\DeclareFontFamily{OT2}{cmr}{\hyphenchar\font45 }
\DeclareFontShape{OT2}{cmr}{m}{n}{%
   <5><6><7><8><9>gen*wncyr%
   <10><10.95><12><14.4><17.28><20.74><24.88>wncyr10}{}
\DeclareFontShape{OT2}{cmr}{b}{n}{%
   <5><6><7><8><9>gen*wncyb%
   <10><10.95><12><14.4><17.28><20.74><24.88>wncyb10}{}
\DeclareMathAlphabet{\mathcyr}{OT2}{cmr}{m}{n}
\DeclareMathAlphabet{\mathcyb}{OT2}{cmr}{b}{n}
\SetMathAlphabet{\mathcyr}{bold}{OT2}{cmr}{b}{n}
\DeclareMathOperator{\Ker}{Ker}
\theoremstyle{plain}
    \newtheorem{theorem}{Theorem}[section]
\theoremstyle{definition}
    \newtheorem{definition}[theorem]{Definition}
    \newtheorem{proposition}[theorem]{Proposition}
    \newtheorem{corollary}[theorem]{Corollary}
    \newtheorem{example}[theorem]{Example}
    \newtheorem{remark}[theorem]{Remark}
    \newtheorem{procedure}[theorem]{Procedure}
\newcommand{\bZ}{\mathbb{Z}}
\newcommand{\bQ}{\mathbb{Q}}
\newcommand{\bR}{\mathbb{R}}
\newcommand{\cA}{\mathcal{A}}
\newcommand{\cS}{\mathcal{S}}
\newcommand{\cZ}{\mathcal{Z}}
\newcommand{\frH}{\mathfrak{H}}
\newcommand{\sh}{\mathbin{\mathcyr{sh}}}
\newcommand{\bk}{\boldsymbol{k}}
\newcommand{\bl}{\boldsymbol{l}}
\newcommand{\bp}{\boldsymbol{p}}
\newcommand{\wcA}{\widehat{\mathcal{A}}}
\newcommand{\wcS}{\widehat{\mathcal{S}}}
\newcommand{\wphi}{\widehat{\phi}}
\newcommand{\wPhi}{\widehat{\Phi}}
\newcommand{\wt}{\mathrm{wt}}
\newcommand{\dep}{\mathrm{dep}}
\newcommand{\CRT}{\mathrm{CRT}}
\newcommand{\h}{\mathrm{h}}
\newcommand{\rt}{\mathrm{rt}}
\newcommand{\jump}[1]{\ensuremath{[\![#1]\!]} }
\address{Global Education Center, Waseda University, 1-6-1, Nishi-Waseda, Shinjuku-ku, Tokyo, 169-8050, Japan}
\email{m-ono@aoni.waseda.jp}
\title{$t$-adic symmetrization map on harmonic algebra}
\author{Masataka Ono}
\begin{document}

\maketitle

\begin{abstract}
Bachmann, Takeyama and Tasaka introduced a $\bQ$-linear map $\phi$, which we call the \emph{symmetrization map} in this paper, on the harmonic algebra $\frH^1$. They calculated $\phi(w)$ explicitly for an element $w$ in $\frH^1$ related to the multiple zeta values of Mordell--Tornheim type. In this paper, we introduce its $t$-adic generalization $\wphi$ and calculate $\wphi(w)$ for an element $w$ in $\frH^1\jump{t}$ constructed from the theory of $2$-colored rooted tree.
\end{abstract}

\section{Introduction}

\subsection{Notation}
For a tuple of non-negative integers $\bk=(k_1, \ldots, k_r)$, we set $\wt(\bk) \coloneqq k_1+\cdots+k_r$ and $\dep(\bk) \coloneqq r$, and we call them the \emph{weight} and the \emph{depth}, respectively. We call a tuple of non-negative integers an \emph{index} if all entries are positive. In particular, there is a unique index of depth 0, which we call the \emph{empty index} and denote by $\varnothing$. We say an index $\bk=(k_1, \ldots, k_r)$ \emph{admissible} if $\bk=\varnothing$ or $k_r\ge2$.

For non-empty tuples $\bk=(k_1, \ldots, k_r), \bl =(l_1, \ldots, l_r) \in \bZ^{r}_{\ge0}$, set $\overline{\bk} \coloneqq (k_r, \ldots, k_1)$, $\bk+\bl \coloneqq (k_1+l_2, \ldots, k_r+l_r), b\binom{\bk}{\bl} \coloneqq \prod_{i=1}^r\binom{k_i+l_i-1}{l_i}$, and $\binom{l-1}{l} \coloneqq \delta_{l, 0}$ for a non-negative integer $l$. Moreover, for $0 \le i \le r$, set $\bk_{[i]}\coloneqq(k_1, \ldots, k_i), \bk^{[i]} \coloneqq (k_{i+1}, \ldots, k_r)$, and we understand $\bk_{[0]}=\bk^{[r]}=\varnothing$.

\subsection{$t$-adic symmetric multiple zeta values}
Hereafter, let $M$ be a non-negative integer. For an index $\bk=(k_1, \ldots, k_r)$, set
\begin{equation*}
\zeta^{}_{M}(\bk)
\coloneqq
\sum_{0<n_1<\cdots<n_r<M}\frac{1}{n^{k_1}_1\cdots n^{k_r}_r}.
\end{equation*}
We understand that the empty sum is $0$ and $\zeta^{}_{M}(\varnothing)=1$. If $\bk$ is admissible, the limit $\lim_{M \rightarrow \infty}\zeta^{}_M(\bk)$ exists.
Let $\zeta(\bk)$ denote this limit and we call it the \emph{multiple zeta value} (MZV).

Many variants of MZV are known today. In this paper, we focus on \emph{$t$-adic symmetric multiple zeta values} ($\wcS$-MZV, \cite{HMO21}, \cite{OSY20}). 
For an index $\bk=(k_1, \ldots, k_r)$, set
\begin{equation*}
\zeta^{\sh}_{\wcS}(\bk)
\coloneqq
\sum_{i=0}^{r}(-1)^{\bk^{[i]}}\zeta^{\sh}(\bk_{[i]})
\sum_{\bl \in \bZ^{r-i}_{\ge0}}b\binom{\bk^{[i]}}{\bl}\zeta^{\sh}(\overline{\bk^{[i]}+\bl})t^{\wt(\bl)} \in \cZ\jump{t}.
\end{equation*}
Here, $\zeta^{\sh}(\bk)$ is the shuffle regularized multiple zeta value \cite{IKZ06} which is an element in the $\bQ$-algebra $\cZ \subset \bR$ generated by all MZVs. We set $\zeta^{\sh}_{\cS}(\bk) \coloneqq \zeta^{\sh}_{\wcS}(\bk)|_{t=0} \in \cZ$. 
We define the $\wcS$-MZV $\zeta^{}_{\wcS}(\bk)$ and the \emph{symmetric multiple zeta value} ($\cS$-MZV) $\zeta^{}_{\cS}(\bk)$ by
\begin{align*}
\zeta^{}_{\wcS}(\bk)
&\coloneqq \zeta^{\sh}_{\wcS}(\bk) \bmod{\zeta(2)} \in (\cZ/\zeta(2)\cZ)\jump{t}, \\
\zeta^{}_{\cS}(\bk)
&\coloneqq \zeta^{\sh}_{\cS}(\bk) \bmod{\zeta(2)} \in \cZ/\zeta(2)\cZ.
\end{align*}
Note that the notion $\wcS$-MZV appeared first in the literature in Jarossay's article \cite[Definition 1.3]{J21}. He defined it in terms of Drinfeld associator, and named it \emph{$\Lambda$-adjoint multiple zeta values}, where $\Lambda$ is an indeterminate corresponding to our $t$. It is conjectured that $\zeta^{}_{\cS}(\bk)$ (resp. $\zeta^{}_{\wcS}(\bk)$) and the finite multiple zeta values $\zeta^{}_{\cA}(\bk)$ (resp.~$\bp$-adic finite multiple zeta values $\zeta^{}_{\wcA}(\bk)$) satisfies the same $\bQ$-linear relations (resp. $\bQ$-linear or $\bp$-adic/$t$-adic relations). For details, see \cite[Conjecture 5.3.2]{J21}, \cite[Conjecture 9.5]{Kan19}, \cite{KZ21}, and \cite[Conjecture 4.3]{OSY20}.

\subsection{$t$-adic symmetrization map on the harmonic algebra}
Following \cite{H97}, we introduce the algebraic setup of MZVs and $\wcS$-MZVs. Let $\frH \coloneqq \bQ\langle x,y\rangle$ be the non-commutative polynomial ring over $\bQ$ with variables $x$ and $y$, and set $\frH^1 \coloneqq \bQ+y\frH \subset \frH$. Note that $\frH^1$ is generated by $z_k \coloneqq yx^{k-1} \ (k \in \bZ_{\ge1})$. Let $Z^{\sh} \colon \frH^1 \rightarrow \bR$ be a $\bQ$-linear map defined by $Z^{\sh}(z_{\bk})\coloneqq \zeta^{\sh}(\bk)$ for any index $\bk$. Here, we set $z_{\bk}=z_{k_1}\cdots z_{k_r}$ if $\bk=(k_1, \ldots, k_r)$  (we understand $z_{\varnothing}=1$). Similarly, we define a $\bQ$-linear map $Z^{\sh}_{\wcS} \colon \frH^1 \rightarrow \cZ\jump{t}$ by $Z^{\sh}_{\wcS}(z_{\bk}) \coloneqq \zeta^{\sh}_{\wcS}(\bk)$. 

Consider the $\bQ\jump{t}$-linear map $\wphi \colon \frH^1\jump{t} \rightarrow \frH^1\jump{t}$ defined by 
\begin{equation}\label{eq:sym_map_H1}
\wphi(z_{\bk})
\coloneqq \sum_{i=0}^{r}(-1)^{\wt(\bk^{[i]})}z_{\bk_{[i]}}\sh 
\sum_{\bl \in \bZ^{l-i}_{\ge0}}b\binom{\bk^{[i]}}{\bl}z_{\overline{\bk^{[i]}+\bl}}t^{\wt(\bl)}
\end{equation}
for an index $\bk$. Here, $\sh \colon \frH \times \frH \rightarrow \frH$ is the shuffle product on $\frH$ (see the Definition \ref{def:sh_prod}). Note that the $\bQ$-linear map $\phi \coloneqq \wphi|_{\frH^1} \colon \frH^1 \rightarrow \frH^1$ was introduced first in \cite[(4.6)]{BTT21}. Since $Z^{\sh}$ is homomorphism with respect to $\sh$, we have $\zeta^{\sh}_{\wcS}(\bk)=Z^{\sh} \circ \wphi (z_{\bk})$. We call $\wphi$ (resp. $\phi$) the \emph{$t$-adic symmetrization map on $\frH^1$} (resp. the \emph{symmetrization map on $\frH^1$}).

Bachmann, Takeyama and Tasaka \cite[Theorem 4.6]{BTT21} calculated the element $\phi\bigl((z_{k_1}\sh \cdots \sh z_{k_r})x^{k_{r+1}}\bigr)$ in $\frH^1$ for positive integers $k_1, \ldots, k_{r+1}$. 

\begin{theorem}\label{thm:BTT}
For positive integers $k_1, \ldots, k_{r+1}$, we have
\begin{equation}\label{eq:phi_MT}
\phi\bigl((z_{k_1} \sh \cdots \sh z_{k_r})x^{k_{r+1}}\bigr)
=\sum_{i=1}^{r+1}(-1)^{k_i+k_{r+1}}
(z_{k_1}\sh \cdots \sh \check{z}_{k_{i}} \sh \cdots \sh z_{k_{r+1}})x^{k_i}.
\end{equation}
Here the symbol $\check{z}_{k_i}$ means that the factor $z_{k_i}$ is skipped.
\end{theorem}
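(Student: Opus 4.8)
The plan is to compute $\phi\bigl((z_{k_1}\sh\cdots\sh z_{k_r})x^{k_{r+1}}\bigr)$ directly from the defining formula \eqref{eq:sym_map_H1} (with $t=0$), using the fact that $\phi$ is built from the shuffle product and that $Z^{\sh}$ intertwines $\phi$ with $\zeta^{\sh}_{\cS}$. The first reduction is to observe that the argument $(z_{k_1}\sh\cdots\sh z_{k_r})x^{k_{r+1}}$ can be written as a $\bQ$-linear combination of monomials $z_{\bk}$ via the stuffle-type expansion of the iterated shuffle product $z_{k_1}\sh\cdots\sh z_{k_r}$, namely $z_{k_1}\sh\cdots\sh z_{k_r}=\sum_{\sigma}z_{(k_{\sigma(1)},\ldots,k_{\sigma(r)})}+(\text{lower-depth terms from the }x\text{'s colliding})$; more precisely one uses the combinatorial identity expressing an iterated shuffle of $z_{a_i}$'s in terms of permutations and "contractions" of the exponents. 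Then appending $x^{k_{r+1}}$ raises the last exponent, so each resulting monomial $z_{\bk}$ has the shape $z_{\bk}=(z_{a_1}\cdots z_{a_s})$ with $a_s$ being a sum that includes $k_{r+1}$. Applying \eqref{eq:sym_map_H1} to each such monomial and carefully tracking the binomial coefficients $b\binom{\bk^{[i]}}{\bl}$ is the computational heart of the argument.

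Rather than expanding everything into monomials, a cleaner route — and the one I would pursue first — is to exploit the antipode/Hopf-algebra structure implicit in the formula for $\phi$. The map $\phi$ as written in \eqref{eq:sym_map_H1} is essentially $\phi(w)=\sum w_{(1)}\sh S(w_{(2)})$-type expression for a suitable coproduct and antipode-like map $S$ related to the "reversal and binomial" operation $z_{\bk^{[i]}}\mapsto\sum_{\bl}b\binom{\bk^{[i]}}{\bl}z_{\overline{\bk^{[i]}+\bl}}$, which is exactly the map governing the duality between shuffle-regularized MZVs and their "reversed" counterparts. Recognizing $\sum_{\bl}(-1)^{\wt(\bk)}b\binom{\bk}{\bl}z_{\overline{\bk+\bl}}$ as (the image under a standard anti-automorphism of) a known operation — call it $\rho(z_{\bk})$ — lets one rewrite $\phi(z_{\bk})=\sum_{i=0}^r z_{\bk_{[i]}}\sh\rho(z_{\bk^{[i]}})$, i.e. $\phi=m\circ(\mathrm{id}\sh\rho)\circ\Delta$ for the deconcatenation coproduct $\Delta(z_{\bk})=\sum_i z_{\bk_{[i]}}\otimes z_{\bk^{[i]}}$. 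One then checks that $\rho$ is an $\sh$-antihomomorphism (this is a reformulation of Hoffman's duality / the fact that $\zeta^{\sh}(\overline{\bk})$-type generating series multiply correctly), so $\phi$ behaves well under shuffle products: $\phi(u\sh v)$ is computed from $\phi(u)$ and $\phi(v)$ by a Leibniz-type rule. The hard part is handling the extra $x^{k_{r+1}}$, which is not of the form $z_k$ and so does not sit inside $\frH^1$ on its own; one must treat $wx^{k_{r+1}}$ by writing $z_{k_1}\sh\cdots\sh z_{k_r}=\sum_j z_j\cdot(\text{something in }\frH^1)$ peeling off the first letter $y$, or equivalently by inducting on $r$ and using that $x^{k_{r+1}}$ acts on the right.

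Concretely, I would induct on $r$. For $r=0$ the claim is $\phi(x^{k_1})$ — but $x^{k_1}\notin\frH^1$ unless $k_1=0$, so the genuine base case is $r=1$: $\phi(z_{k_1}x^{k_2})=\phi(z_{k_1+k_2-?})\dots$ — rather, $z_{k_1}x^{k_2}=yx^{k_1-1}x^{k_2}=yx^{k_1+k_2-1}=z_{k_1+k_2}$ only if we think of it naively, but actually $z_{k_1}x^{k_2}=z_{k_1+k_2}$ directly as words, so $\phi(z_{k_1+k_2})=\sum_{i=0}^{1}$... giving $z_{k_1+k_2}+(-1)^{k_1+k_2}\sum_{\bl\ge 0}\binom{k_1+k_2+\bl-1}{\bl}z_{k_1+k_2+\bl}\cdot(\text{at }t=0\text{ only }\bl=0)$, i.e. $z_{k_1+k_2}+(-1)^{k_1+k_2}z_{k_1+k_2}$; comparing with the RHS of \eqref{eq:phi_MT} for $r=1$ gives $(-1)^{k_1+k_2}(z_{k_2})x^{k_1}+(-1)^{2k_2}(z_{k_1})x^{k_2}$, and one verifies these agree. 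For the inductive step, write $z_{k_1}\sh\cdots\sh z_{k_r}=(z_{k_1}\sh\cdots\sh z_{k_{r-1}})\sh z_{k_r}$ and use the shuffle recursion $u\sh(z_{k_r})= \cdots$ together with the induction hypothesis and the Leibniz rule for $\phi$ under $\sh$; the telescoping of signs $(-1)^{k_i+k_r}\to(-1)^{k_i+k_{r+1}}$ is the bookkeeping I expect to be the main obstacle, since one must show all the binomial sums collapse (at $t=0$) to the single-term answer and that the spurious terms from shuffling $x^{k_{r+1}}$ past the reversed words cancel in pairs. The cancellation should follow from the identity $\sum_{\bl}(-1)^{|\bl|}b\binom{\bk}{\bl}=\delta_{\dep(\bk),0}$-type relations (the $t=0$ specialization killing all positive-weight $\bl$), which is precisely the mechanism by which $\zeta^{\sh}_{\cS}$ is much simpler than $\zeta^{\sh}_{\wcS}$.
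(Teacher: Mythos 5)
Your proposed ``cleaner route'' rests on a false key lemma. Writing $\phi(z_{\bk})=\sum_{i}z_{\bk_{[i]}}\sh\rho(z_{\bk^{[i]}})$ with the deconcatenation coproduct and $\rho(z_{\bk})=(-1)^{\wt(\bk)}z_{\overline{\bk}}$ (the $t=0$ specialization of the binomial--reversal operator) is a correct restatement of the definition, but $\rho$ is \emph{not} a shuffle (anti)homomorphism. Concretely, $z_1\sh z_2=2z_{(1,2)}+z_{(2,1)}$, so $\rho(z_1\sh z_2)=-z_{(1,2)}-2z_{(2,1)}$, whereas $\rho(z_1)\sh\rho(z_2)=-z_1\sh z_2=-2z_{(1,2)}-z_{(2,1)}$; these differ. (The map $z_{\bk}\mapsto(-1)^{\wt(\bk)}z_{\overline{\bk}}$ is compatible with the \emph{harmonic} product, not with $\sh$.) Consequently the Leibniz-type rule you want for $\phi(u\sh v)$ does not follow, and your induction on $r$ has no engine. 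The correct compatibility is the one recorded in Remark \ref{rm:t-adicKaneko}, namely $\phi(z_{\bk}\sh z_{\bl})=(-1)^{\wt(\bl)}\phi(z_{\bk}z_{\overline{\bl}})$, which turns shuffles into concatenations \emph{inside} $\phi$ rather than expressing $\phi$ of a shuffle through $\phi$ of its factors; even granting it, you are left with $\phi$ of a long concatenated word, i.e.\ exactly the binomial bookkeeping your first paragraph defers. Finally, the step you yourself flag as ``the hard part''---the trailing $x^{k_{r+1}}$, which is where all the content of \eqref{eq:phi_MT} sits---is never addressed. The $r=1$ base case you verify is correct, but it is the only part of the argument actually carried out.

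For comparison, the paper does not prove Theorem \ref{thm:BTT} by word combinatorics in $\frH^1$ at all: it realizes $(z_{k_1}\sh\cdots\sh z_{k_r})x^{k_{r+1}}$ as $w(X,\bk)$ for a star-shaped $2$-colored rooted tree, computes $\wPhi(X,\bk)$ as a one-line sum over the black vertices, and transports the result through the commutative diagram of Remark \ref{rm:comm_diag}; that diagram is in turn established by evaluating both sides on all truncated multiple harmonic sums (Proposition \ref{prop:root_change_zeta}) and invoking the injectivity of the evaluation map (Theorem \ref{thm:extYamamoto}). So the paper's route is analytic/evaluation-based, while yours is purely algebraic; a purely algebraic proof is possible (it is essentially the original argument of Bachmann--Takeyama--Tasaka), but it requires a correct substitute for your $\rho$-homomorphy claim and an actual mechanism for handling $x^{k_{r+1}}$, neither of which is supplied here.
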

Since $Z^{\sh} \circ \phi\bigl((z_{k_1}\sh \cdots \sh z_{k_r})x^{k_{r+1}}\bigr)$ coincides with the $\cS$-MZV of Mordell--Tornheim type (MT-type) (\cite[Theorem 1.6]{BTT21}, \cite[Remark 4.9]{OSY20}), Theorem \ref{thm:BTT} gives a formula of $\cS$-MZV of MT-type in terms of MZV of MT-type (see also \cite[Proposition 4.8]{OSY20}).

\subsection{$2$-colored rooted trees and main theorem}
In our recent paper \cite{OSY20}, Seki, Yamamoto and the author defined $\wcS$-MZV associated with 2-colored rooted tree. The $2$-colored rooted tree is a tuple $X=(V, E, \rt, X)$ consisting of a finite tree $(V, E)$ with some additional data $(\rt, V_{\bullet})$, introduced first by the author in \cite[Definition 1.2]{O17} (for the precise definition, see Definition \ref{def:2crt}). We gave an algorithm to constructed from the element $w(X, \bk)$ in $\frH^1$ from a ``harvestable pair" $(X, \bk)$, that is, a tuple consisting of a $2$-colored rooted tree $X=(V, E, \rt, X)$ and a tuple $\bk \coloneqq (k_e)_{e \in E} \in \bZ^{E}_{\ge0}$ called an index on $X$ with certain conditions (see Definition \ref{def:harv_pair}). Note that the element $(z_{k_1}\sh \cdots \sh z_{k_r})x^{k_{r+1}}$ is constructed a certain harvestable pair $(X, \bk)$ \cite[Definition 4.5, Proposition 4.8]{OSY20}. In this paper, we calculate $\wphi(w)$ for any element $w=w(X, \bk) \in \frH^1$ constructed from some pair $(X, \bk)$ containing Theorem \ref{thm:BTT} as a special case. 

To state the main theorem, we give some notation. For a $2$-colored rooted tree $X=(V, E, \rt, V_{\bullet})$ with $\rt \in V_{\bullet}$ and an essentially positive (see Definition \ref{def:ess_pos}) index $\bk$ on $X$, let $(X_{\h}, \bk_{\h})$ denote its harvestable form constructed by the algorithm in \cite[Proposition 3.18]{OSY20}, and $w(X_{\h}, \bk_{\h})$ be the element in $\frH^1$ constructed by the algorithm in \cite[Theorem 3.17]{OSY20} from $(X_{\h}, \bk_{\h})$. For $v, v' \in V$, $P(v, v')$ denotes the path starting from $v$ to $v'$. For an index $\bk=(k_e)_{e\in E}$ on $X$ and $E' \subset E$, set $k_{E'} \coloneqq \sum_{e \in E'}k_{e}$.Finally, for subsets $E', E'' \subset E, E_0 \subset E' \cap E''$ and $\bk=(k_e)_{e \in E'} \in \bZ^{E'}_{\ge0}, \bl=(l_e)_{e \in E''} \in \bZ^{E''}_{\ge0}$, set $b_{E_0}\binom{\bk}{\bl} \coloneqq \prod_{e \in E_0} \binom{k_e+l_e-1}{l_e}$.

\begin{theorem}\label{mainthm:calu_phi}
Let $X=(V, E, \rt, V_{\bullet})$ be a 2-colored rooted tree with $\rt \in V_{\bullet}$ and $\bk \coloneqq (k_e)_{e \in E}$ an essentially positive index on $X$. Then we have the following equality in $\frH^1\jump{t}$.
\begin{equation*}
\wphi\bigl(w(X_{\h}, \bk_{\h})\bigr)
=\sum_{v \in V_{\bullet}}(-1)^{k_{P(\rt, v)}}
\sum_{\bl \in \bZ^{P(\rt, v)}_{\ge0}} b_{P(\rt, v)}\binom{\bk}{\bl}
w(X_{v, \h}, (\bk+\bl)_{\h})t^{l_{P(\rt, v)}}
\end{equation*}
Here, we set $X_v\coloneqq (V, E, v, V_{\bullet})$. In particular, we have
\begin{equation*}
\phi\bigl(w(X_{\h}, \bk_{\h})\bigr)
=\sum_{v \in V_{\bullet}}(-1)^{k_{P(\rt, v)}}w(X_{v, \h}, \bk_{\h}).
\end{equation*}
\end{theorem}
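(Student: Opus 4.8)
The plan is to reduce the general statement to Theorem~\ref{thm:BTT} by exploiting the inductive structure of the harvesting algorithm of \cite{OSY20}. Recall that the element $w(X_{\h}, \bk_{\h})$ is built recursively: a $2$-colored rooted tree with several branches at the root is turned into $\frH^1$-elements branch by branch, and the branches are glued together with the shuffle product $\sh$, while a terminal chain of edges contributes a factor $x^{k}$. The key point is that the definition \eqref{eq:sym_map_H1} of $\wphi$ is itself built out of the operations $\sh$, multiplication by $x$, index reversal, and the binomial coefficients $b\binom{\bk}{\bl}$ — exactly the operations appearing in the harvesting algorithm. So the first step is to set up a careful induction on the number of edges $|E|$ of $X$ (or on the number of black vertices, whichever makes the recursion cleanest), with Theorem~\ref{thm:BTT} and the trivial cases $|E|\le 1$ serving as the base.

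Second, I would isolate the two ``moves'' in the harvesting algorithm that one must commute past $\wphi$. The first is the shuffle-gluing of independent sub-trees hanging off the root; here one needs a compatibility statement of the shape ``$\wphi$ applied to a shuffle of pieces equals a sum over black vertices $v$ of shuffles of the $\wphi$-transformed pieces'', which is precisely the combinatorial content already visible in \eqref{eq:phi_MT} (where the $r$ parallel $z_{k_i}$'s are exactly the shuffle-branches and the tail $x^{k_{r+1}}$ is the final descending chain). The second move is appending a descending chain of white/black vertices at the bottom: one must track how the telescoping factor $\sum_i(-1)^{\wt(\bk^{[i]})}z_{\bk_{[i]}}\sh\,(\cdots)$ interacts with multiplication by powers of $x$, using the standard identity $z_k\sh x = $ (lower-weight terms) to resolve the shuffles with $x$'s. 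The sum over $v\in V_\bullet$ in the main theorem, together with the re-rooted trees $X_v=(V,E,v,V_\bullet)$, is then exactly the bookkeeping that records at which black vertex the ``reversal'' in \eqref{eq:sym_map_H1} is anchored; the path $P(\rt,v)$ is the set of edges whose indices get shifted by $\bl$ and sign-twisted, which matches the factors $(-1)^{k_{P(\rt,v)}}$, $b_{P(\rt,v)}\binom{\bk}{\bl}$, and $t^{l_{P(\rt,v)}}$.

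Third, once both moves are established as lemmas, the induction is a matter of writing $w(X_{\h},\bk_{\h})$ as ``(shuffle of the harvested sub-trees $w(Y_i{}_{,\h},\cdot)$) $\cdot\, x^{(\text{tail})}$'', applying the two lemmas, invoking the inductive hypothesis on each $Y_i$, and re-indexing the resulting double sum over (choice of sub-tree $\times$ black vertex inside that sub-tree) $\cup$ (black vertices on the tail) into a single sum over $v\in V_\bullet$ of the whole tree $X$. The essential-positivity hypothesis on $\bk$ guarantees that the harvestable form $(X_{\h},\bk_{\h})$ is defined and that the recursion in \cite[Proposition~3.18]{OSY20} terminates, so no convergence or well-definedness issue arises; the specialization $t=0$ kills every term with $\bl\neq 0$ and yields the stated corollary immediately. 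Finally, one checks that when $X$ is the specific tree of \cite[Proposition~4.8]{OSY20} giving $(z_{k_1}\sh\cdots\sh z_{k_r})x^{k_{r+1}}$, the formula collapses to \eqref{eq:phi_MT}, confirming consistency.

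The main obstacle I anticipate is the shuffle-gluing lemma: proving that $\wphi$ distributes over the shuffle-of-branches in the precise form needed is not formal, because $\wphi$ is not an algebra homomorphism for $\sh$ (it involves the reversal $\overline{(\,\cdot\,)}$, which is an anti-automorphism). One will have to argue that the ``defect'' from non-multiplicativity is exactly absorbed by letting the distinguished black vertex $v$ range over \emph{all} branches rather than being fixed — i.e., the correct statement is not $\wphi(a\sh b)=\wphi(a)\sh\wphi(b)$ but a Leibniz-type rule $\wphi(a\sh b)=\wphi(a)\sh b_+ + a_+\sh\wphi(b)$ for suitable ``positive parts'' $a_+,b_+$ (the $i=r$ term of \eqref{eq:sym_map_H1}), and verifying this rule — most likely by a direct manipulation of \eqref{eq:sym_map_H1} together with the coproduct-compatibility of $\sh$ — is where the real work lies. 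Controlling the binomial identities $\sum b\binom{\bk^{[i]}}{\bl}\,b\binom{\cdots}{\cdots}=b\binom{\cdots}{\cdots}$ needed to merge the $\bl$-sums across branches is a secondary, purely combinatorial, nuisance that I expect to handle with the Vandermonde-type convolution already implicit in the definition of $b\binom{\bk}{\bl}$.
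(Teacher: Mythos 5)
Your plan is a genuinely different route from the paper's, and as written it has a real gap at its center. The paper does not perform any induction on the tree at all: it evaluates both sides of the identity under the truncated evaluation maps $Z_M$ for every $M\ge 0$, shows the two evaluations agree (Proposition \ref{prop:root_change_zeta}, which is a direct partial-fraction/geometric-series computation on the truncated sums $\zeta_M(X,u;\bk)$ obtained by re-rooting at $u$), and then concludes the identity in $\frH^1\jump{t}$ from the injectivity of $Z$ on $\frH^1_{\ast}\jump{t}$ (Theorem \ref{thm:extYamamoto}, the $t$-adic extension of Yamamoto's theorem). This sidesteps entirely the question of how $\wphi$ interacts with the shuffle-gluing in Procedure \ref{proc:w}, which is exactly the question your approach must answer head-on.

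The gap is your ``shuffle-gluing lemma.'' You correctly identify it as the crux and correctly observe that $\wphi$ is not a $\sh$-homomorphism, but the Leibniz-type rule you conjecture, $\wphi(a\sh b)=\wphi(a)\sh b_+ + a_+\sh\wphi(b)$, is not the identity that actually governs this interaction. The correct statement (Remark \ref{rm:t-adicKaneko}) is of a different shape: $\wphi(z_{\bk}\sh z_{\bl})=(-1)^{\wt(\bl)}\sum_{\bl'}b\binom{\bl}{\bl'}\wphi(z_{\bk}z_{\overline{\bl+\bl'}})t^{\wt(\bl')}$, i.e.\ $\wphi$ converts a shuffle factor into a reversed, sign-twisted, binomially deformed concatenation \emph{inside} the argument of $\wphi$, rather than splitting as a derivation. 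A two-term Leibniz rule cannot reproduce the sum over \emph{all} black vertices $v\in V_{\bullet}$ with path-dependent signs $(-1)^{k_{P(\rt,v)}}$; already for the linear tree of Example \ref{ex:Phi_of_EZ3} one gets $r+1$ terms, not two. So the lemma on which your induction rests is both unproven and, in the form stated, false. A direct algebraic proof along your lines is probably possible if you replace your conjectured rule by the identity above (together with a companion rule for appending a tail $x^{k'}z_{k_1}\cdots z_{k_r}$), but proving that identity within $\frH^1\jump{t}$ without appealing to the evaluation-and-injectivity argument is itself the hard part --- in the paper it is again deduced from Theorem \ref{thm:extYamamoto}, not by symbolic manipulation. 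The secondary issues you flag (Vandermonde convolutions for merging the $\bl$-sums, the final re-indexing over $v\in V_{\bullet}$) are indeed routine once the key lemma is in place, and your $t=0$ specialization remark is fine.
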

%

\subsection{Contents}
The contents of this paper is as follows. In Section $2$, we review the theory of $2$-colored rooted tree along with \cite{OSY20} which contains the definition of harvestable pair, and we introduce some product structures on the $\bQ$-vector space $2\CRT$ generated by $2$-colored rooted trees. In Section $3$, we introduce the $t$-adic symmetrization map $\wPhi$ on $2\CRT$, and prove Theorem \ref{mainthm:calu_phi}. We also give some identities in $\frH^1\jump{t}$ as an application of Theorem \ref{mainthm:calu_phi} which contains a generalization of Theorem \ref{thm:BTT} to $t$-adic case.

\section{The theory of 2-colored rooted trees}

In this section, we recall the theory of 2-colored rooted tree and the $\wcS$-MZV associated with them along with \cite{OSY20}, and we introduce a product structure on $2$-colored rooted trees.

\subsection{Notion of 2-colored rooted tree}

\begin{definition}[{\cite[Definition 1.2]{O17}, \cite[Definition 3.1]{OSY20}}]\label{def:2crt}
A 2-colored rooted tree is a tuple $X=(V, E, \rt, V_{\bullet})$ consisting of the following data.
\begin{enumerate}
\item $(V, E)$ is a finite tree with the set of vertices $V$ and the set of edges $E$. Note that $\#V=\#E+1<\infty$.
\item $\rt \in V$ is a vertex, called the \emph{root}.
\item $V_{\bullet}$ is a subset of $V$ containing all terminals of $(V, E)$. Here, a \emph{terminal vertex} means a vertex of degree $1$.
\end{enumerate}
\end{definition}

For a 2-colored rooted tree $X=(V, E, \rt, V_{\bullet})$ and an index $\bk=(k_e)_{e \in E}$, set
\begin{align*}
\zeta^{}_{M}(X; \bk)
\coloneqq
\sum_{\substack{(m_v)_{v \in V_{\bullet}} \in \bZ^{V_{\bullet}}_{\ge1} \\ \sum_{v \in V_{\bullet}}m_v=M}}
\prod_{e \in E}
\biggl(\sum_{v \in V_{\bullet} \text{ s.t. }e \in P(\rt, v)}m_v\biggr)^{-k_e} \in \bQ
\end{align*}
and 
\begin{align*}
\zeta^{}_{\wcS, M}(X; \bk)
&\coloneqq
\sum_{u \in V_{\bullet}}\zeta^{}_{M}(X, u; \bk), \\
\zeta^{}_{M}(X, u; \bk)
&\coloneqq
\sum_{(m_v)_{v \in V_{\bullet}} \in I_M(V_{\bullet}, u)}
\prod_{e \in E} \biggr(\sum_{v \in V_{\bullet} \text{ s.t. }e \in P(\rt, v)}(m_v+\delta_{u,v}t)\biggr)^{-k_e} \in \bQ\jump{t}
\ (u \in V_{\bullet}).
\end{align*} 
Here, for $u \in V_{\bullet}$, $\delta_{u,v}$ is the Kronecker's delta and 
\begin{align*}
I_M(V_{\bullet}, u)
\coloneqq
\left.\biggl\{
(m_v)_{v \in V_{\bullet}} \in \bZ^{V_{\bullet}}
\ \right| \
m_v>0 \ (v \neq u), -M<m_u<0, \sum_{v \in V_{\bullet}}m_v=0
\biggr\}.
\end{align*}

We use diagrams to indicate $2$-colored rooted trees. 
For $X=(V, E, \rt, V_\bullet)$, 
the symbol $\bullet$ (resp.~$\circ$) denotes a vertex in $V_{\bullet}$ 
(resp.~$V_\circ\coloneqq V\setminus V_\bullet$) which is not the root. 
We use the symbol $\blacksquare$ or $\square$ to denote the root 
according to whether the root belongs to $V_\bullet$ or not. 
If endpoints of an edge $e$ are $v$ and $v'$, then we will sometimes express $e$ by the set $\{v,v'\}$.

\begin{example}\label{ex:2crt_EZ}
For an integer $r\geq 0$, let us consider the linear tree with $r+1$ vertices $v_1,\ldots,v_{r+1}$ 
and $r$ edges $e_a\coloneqq\{v_a,v_{a+1}\}$ ($a=1,\ldots,r$). 
We define the $2$-colored rooted tree $X=(V, E, \rt, V_\bullet)$ 
by setting $\rt\coloneqq v_{r+1}$ and $V_\bullet\coloneqq V=\{v_1,\ldots,v_{r+1}\}$. 
We identify an index $\bk=(k_1,\ldots,k_r)$ in the usual sense 
with an index on $X$ by setting $k_a\coloneqq k_{e_a}$. 
This situation is indicated by the diagram: 
\begin{equation*}
\begin{tikzpicture}[thick]
\coordinate (v_1) at (0,2.0) node at (v_1) [left] {\tiny $v_1$};
\coordinate (v_2) at (0,1.4) node at (v_2) [left] {\tiny $v_2$};
\coordinate (v_r) at (0,0.6) node at (v_r) [left] {\tiny $v_r$};
\coordinate (v_{r+1}) at (0,0) node at (v_{r+1}) [left] {\tiny $\rt=v_{r+1}$};
\draw  (0,1.4) -- node [right] {\tiny $k_1$} (0,2.0);
\draw [dotted] (0,0.6) -- (0,1.4);
\draw  (0,0) -- node [right] {\tiny $k_r$} (0,0.6);
\fill (v_1) circle (2pt) (v_2) circle (2pt) (v_r) circle (2pt);
\fill (-0.1,-0.1) rectangle (0.1,0.1);
\end{tikzpicture}
\end{equation*}
Then we have $\zeta^{}_{M}(X; \bk)=\zeta^{}_{M}(\bk)$ from \cite[Example 2.1]{O17}, and we have
$\zeta^{}_{\wcS, M}(X; \bk)=\zeta^{\sh}_{\wcS, M}(\bk)$
from \cite[Example 2.2]{OSY20}. 
\end{example}
\begin{remark}
We only consider the non-planar rooted tree which have no ordering of edges for each vertex. For example, we do not distinguish the following two $2$-colored rooted trees.
\begin{equation*}
\begin{tikzpicture}[thick]
\coordinate (v_1) at (-0.8,1.0) node at (v_1) [left]{};
\coordinate (v_2) at (-0.8,0.2) node at (v_2) [left]{};
\coordinate (v_3) at (-0.8,-0.6) node at (v_3) [left]{};
\coordinate (v_4) at (0.8,0.2) node at (v_4) [left]{};
\coordinate (v_5) at (0.8,-0.6) node at (v_5) [left]{};
\coordinate (rt) at (0,-1.0) node at (rt) [left]{};
\draw  (-0.8,1.0) -- node [right]{} (-0.8,0.2);
\draw  (-0.8,0.2) -- node [right]{} (-0.8,-0.6);
\draw  (-0.8,-0.6) -- node [right]{} (0.0,-1.0);
\draw  (0.8,0.2) -- node [right]{} (0.8,-0.6);
\draw  (0.8,-0.6) -- node [right]{} (0.0,-1.0);
\fill (v_1) circle (2pt) (v_4) circle (2pt);
\fill (-0.1,-1.1) rectangle (0.1,-0.9);
\filldraw[fill=white] (v_2) circle [radius=0.7mm];
\filldraw[fill=white] (v_3) circle [radius=0.7mm];
\filldraw[fill=white] (v_5) circle [radius=0.7mm];
\end{tikzpicture}
\qquad
\begin{tikzpicture}[thick]
\coordinate (w_1) at (-0.8,1.2) node at (w_1) [left]{};
\coordinate (w_2) at (-0.8,0.4) node at (w_2) [left]{};
\coordinate (w_3) at (0.8,2.0) node at (w_3) [left]{};
\coordinate (w_4) at (0.8,1.2) node at (w_4) [left]{};
\coordinate (w_5) at (0.8,0.4) node at (w_5) [left]{};
\coordinate (rt) at (0,0) node at (rt) [left]{};
\draw  (-0.8,1.2) -- node [right]{} (-0.8,0.4);
\draw  (-0.8,0.4) -- node [right]{} (0.0,0.0);
\draw  (0.8,2.0) -- node [right]{} (0.8,1.2);
\draw  (0.8,1.2) -- node [right]{} (0.8,0.4);
\draw  (0.8,0.4) -- node [right]{} (0.0,0.0);
\fill (w_1) circle (2pt) (w_3) circle (2pt);
\fill (-0.1,-0.1) rectangle (0.1,0.1);
\filldraw[fill=white] (w_2) circle [radius=0.7mm];
\filldraw[fill=white] (w_4) circle [radius=0.7mm];
\filldraw[fill=white] (w_5) circle [radius=0.7mm];
\end{tikzpicture}
\end{equation*}
Note that the two pairs $(X, \bk), (Y, \bl)$ consisting of the 2-colored rooted tree and an index coincide when $X=Y$ holds and each component of $\bk$ and $\bl$ coincides. In this case, of course we have $\zeta^{}_{M}(X; \bk)=\zeta^{}_{M}(Y; \bl)$ and $\zeta^{}_{\wcS, M}(X; \bk)=\zeta^{}_{\wcS, M}(Y; \bl)$
\end{remark}

We recall the definition of the essentially positivity.

\begin{definition}[{\cite[Definition 3.1]{OSY20}}]\label{def:ess_pos}
Let $X=(V, E, \rt, V_{\bullet})$ be a $2$-colored rooted tree. An index $\bk=(k_e)_{e \in E}$ is essentially positive when $k_{P(v, v')}=\sum_{e \in P(v, v')}k_e$ is positive for any two distinct  vertices $v, v' \in V_{\bullet}$.
\end{definition}
%

\subsection{Product structure on $2$-colored rooted trees}
Thanks to the non-planarity of rooted trees, we can consider the $\bQ$-vector space $2\CRT$ generated by the pair $(X, \bk)$ consisting of a $2$-colored rooted tree $X=(V, E, \rt, V_{\bullet})$ with $\rt \in V_{\bullet}$ and an essentially positive index $\bk$ on $X$. We introduce a product structure on $2\CRT$.

For a $2$-colored rooted tree $X_i=(V_i, E_i, \rt_i, V_{i, \bullet})$ and an index $\bk_i$ on $X_i \ (i=1,2)$, consider the new tree $(V, E)$ obtained by adjoining $\rt_1$ and $\rt_2$. That is, set
\begin{equation*}
V \coloneqq (V_1\setminus \{\rt_1\}) \sqcup (V_2\setminus\{\rt_2\}) \sqcup \{\rt\}, \qquad
E \coloneqq E_1 \sqcup E_2.
\end{equation*}
Here, $\rt$ is the new vertex corresponding $\rt_1$ (or $\rt_2$). We also set $V_\bullet \coloneqq (V_{1,\bullet} \setminus\{\rt_1\}) \sqcup (V_2\setminus\{\rt_2\}) \sqcup \{\rt\}$. Then we obtain a new 2-colored rooted tree $X \coloneqq (V, E, \rt, V_{\bullet})$ with $\rt \in V_\bullet$. Moreover, for an index $\bk_i=(k_{i, e})_{e \in E_i}$ on $X_i$, we define an index $\bk=(k_e)_{e\in E}$ on $X$ by
\begin{align*}
k_e
\coloneqq
\begin{cases}
k_{1, e} & \text {if $e \in E_1$}, \\
k_{2, e} & \text {if $e \in E_2$}.
\end{cases}
\end{align*}
Then, we obtain the $\bQ$-bilinear map $\circ \colon 2\CRT \times 2\CRT \rightarrow 2\CRT$. 

\begin{example}
Consider the following two pairs $(X, \bk)$ and $(X', \bk')$.
\begin{equation*}
(X, \bk)
\coloneqq
\begin{tikzpicture}[thick, baseline=0pt]
\coordinate (v_1) at (-0.8,1.0) node at (v_1) [left]{};
\coordinate (v_2) at (-0.8,0.2) node at (v_2) [left]{};
\coordinate (v_3) at (-0.8,-0.6) node at (v_3) [left]{};
\coordinate (v_4) at (0.8,0.2) node at (v_4) [left]{};
\coordinate (v_5) at (0.8,-0.6) node at (v_5) [left]{};
\coordinate (rt) at (0,-1.0) node at (rt) [left]{};
\draw  (-0.8,1.0) -- node [left]{\tiny $k_1$} (-0.8,0.2);
\draw  (-0.8,0.2) -- node [left]{\tiny $k_2$} (-0.8,-0.6);
\draw  (-0.8,-0.6) -- node [below=-1mm]{\tiny $k_3$} (0.0,-1.0);
\draw  (0.8,0.2) -- node [right]{\tiny $k_4$} (0.8,-0.6);
\draw  (0.8,-0.6) -- node [below=-1mm]{\tiny $k_5$} (0.0,-1.0);
\fill (v_1) circle (2pt) (v_4) circle (2pt);
\fill (-0.1,-1.1) rectangle (0.1,-0.9);
\filldraw[fill=white] (v_2) circle [radius=0.7mm];
\filldraw[fill=white] (v_3) circle [radius=0.7mm];
\filldraw[fill=white] (v_5) circle [radius=0.7mm];
\end{tikzpicture}, 
\qquad 
(X', \bk')
\coloneqq
\begin{tikzpicture}[thick, baseline=0pt]
\coordinate (w_1) at (-0.8,1.0) node at (w_1) [left]{};
\coordinate (w_2) at (0.8,1.0) node at (w_2) [left]{};
\coordinate (w_3) at (0.0,0.6) node at (w_3) [left]{};
\coordinate (w_4) at (0.0,-0.2) node at (w_4) [left]{};
\coordinate (rt) at (0.0,-1.0) node at (rt) [left]{};
\draw  (w_1) -- node [above]{\tiny $l_1$} (w_3);
\draw  (w_2) -- node [above]{\tiny $l_2$} (w_3);
\draw  (w_3) -- node [right]{\tiny $l_3$} (w_4);
\draw  (w_4) -- node [right]{\tiny $l_4$} (rt);
\fill (w_1) circle (2pt) (w_2) circle (2pt) (w_4) circle (2pt);
\fill (-0.1,-0.9) rectangle (0.1,-1.1);
\filldraw[fill=white] (w_3) circle [radius=0.7mm];
\end{tikzpicture}
\end{equation*}
Then we obtain the following.
\begin{equation*}
(X, \bk) \circ (X', \bk')
=
\begin{tikzpicture}[thick, baseline=0pt]
\coordinate (rt) at (0,-1.0) node at (rt) [left]{};
\coordinate (v_1) at (-0.8,1.0) node at (v_1) [left]{};
\coordinate (v_2) at (-0.8,0.2) node at (v_2) [left]{};
\coordinate (v_3) at (-0.8,-0.6) node at (v_3) [left]{};
\coordinate (v_4) at (0.0,0.6) node at (v_4) [left]{};
\coordinate (v_5) at (0.0,-0.2) node at (v_5) [left]{};
\draw  (v_1) -- node [left]{\tiny $k_1$} (v_2);
\draw  (v_2) -- node [left]{\tiny $k_2$} (v_3);
\draw  (v_3) -- node [below=-1mm]{\tiny $k_3$} (rt);
\draw  (v_4) -- node [right]{\tiny $k_4$} (v_5);
\draw  (v_5) -- node [left]{\tiny $k_5$} (rt);
\fill (v_1) circle (2pt) (v_4) circle (2pt);
\fill (-0.1,-1.1) rectangle (0.1,-0.9);
\filldraw[fill=white] (v_2) circle [radius=0.7mm];
\filldraw[fill=white] (v_3) circle [radius=0.7mm];
\filldraw[fill=white] (v_5) circle [radius=0.7mm];
\coordinate (w_1) at (0.8,1.0) node at (w_1) [left]{};
\coordinate (w_2) at (1.6,0.6) node at (w_2) [left]{};
\coordinate (w_3) at (0.8,0.2) node at (w_3) [left]{};
\coordinate (w_4) at (0.8,-0.6) node at (w_4) [left]{};
\draw  (w_1) -- node [right]{\tiny $l_1$} (w_3);
\draw  (w_2) -- node [below]{\tiny $l_2$} (w_3);
\draw  (w_3) -- node [right]{\tiny $l_3$} (w_4);
\draw  (w_4) -- node [below=-1mm]{\tiny $l_4$} (rt);
\fill (w_1) circle (2pt) (w_2) circle (2pt) (w_4) circle (2pt);
\filldraw[fill=white] (w_3) circle [radius=0.7mm];
\end{tikzpicture}
\end{equation*}
\end{example}

It is easy to see that this $\bQ$-bilinear map $\circ$ makes $2\CRT$ an associative commutative $\bQ$-algebra.
The unit element is the 2-colored rooted tree $(\{\rt\}, \varnothing, \rt, \{\rt\})$ consisting of only one singleton $V=V_{\bullet}=\{\rt\}$.

\subsection{Product structure on harvestable pairs}
Next, we give another product structure on harvestable pairs. Firs, we recall the definition of the harvestability.
\begin{definition}[{\cite[Definition 3.14]{OSY20}}]\label{def:harv_pair}
Let $X=(V, E, \rt, V_{\bullet})$ be a $2$-colored rooted tree and $\bk=(k_e)_{e\in E}$ an index on $X$. The pair $(X, \bk)$ is harvestable if the following conditions hold:
\begin{description}
\item[(H1)] The root is a terminal of $(V, E)$. In particular, $\rt$ is in $V_{\bullet}$.
\item[(H2)] All elements of $V_{\circ}$ are branched points.
\item[(H3)] All elements of $V_{\bullet}$ are not branched points.
\item[(H4)] If $v \in V_{\circ}$ is the parent of $u \in V$, then $k_{\{v,u\}}$ is positive.
\item[(H5)] If $u, v \in V_{\bullet}$ and $\{u, v\} \in E$, then $k_{\{u, v\}}$ is positive.
\end{description}
Here, a branched point is a vertex of degree at least $3$, $\{u, v\}$ denotes the edge whose ends are $u$ and $v$, the parent of a vertex $v \neq \rt$ is the unique vertex $p$ satisfying $\{v, p\} \in P(\rt, v)$.
\end{definition}

In \cite[Proposition 3.18]{OSY20}, for a $2$-colored rooted tree $X=(V, E, \rt, V_{\bullet})$ with $\rt \in V_{\bullet}$ and an essentially positive index $\bk$, we constructed a harvestable pair $(X_{\h}, \bk_{\h})$, which we call the harvestable form of $(X, \bk)$, satisfying $\zeta^{}_{\wcS, M}(X; \bk)=\zeta^{}_{\wcS, M}(X_{\h}; \bk_{\h})$. The procedure obtaining $(X_{\h}, \bk_{\h})$ is as follows:
\begin{procedure}\label{proc:harv}
\begin{enumerate}
\item In $(X, \bk)$, if there exists an edge $e$ satisfying the condition in \cite[Proposition 3.4]{OSY20}, then contract $e$ according to \cite[Proposition 3.4]{OSY20}. Repeat this until we obtain a pair $(X_1, \bk_1)$ without such $e$.
\item In $(X_1, \bk_1)$, if there exists a pair of edges satisfying the condition in \cite[Proposition 3.5]{OSY20}, then joint them according to \cite[Proposition 3.5]{OSY20}. Repeat this until we obtain a pair $(X_2, \bk_2)$ without such edges.
\item In $(X_2, \bk_2)$, if there exists a black branched point $v \neq \rt$, then insert a new white vertex $v'$ together with an edge $\{v, v'\}$ at the location of $v$, and replace the edges $\{v, u\}$ by $\{v', u\}$ for vertices $u$ whose parent is $v$ in the original tree. Set the component of the index on the new edge $\{v, v'\}$ to be zero. Note that this is the inverse operation of the contraction according to \cite[Proposition 3.4]{OSY20}. Repeat this until we obtain a pair $(X_3, \bk_3)$ without such $v$.
\item In $(X_3, \bk_3)$, if the root is not terminal, then insert a new white vertex and an edge at location of the root, in the same way as (iii). The result is $(X_{\h}, \bk_{\h})$ we want to construct.
\end{enumerate}
\end{procedure} 
Let $2\CRT_{\h}$ be the $\bQ$-vector space generated by harvestable pairs. Then, from Procedure \ref{proc:harv}, we have a $\bQ$-linear map $h \colon 2\CRT \rightarrow 2\CRT_{\h}$ defined by sending $(X, \bk)$ to its harvestable form $(X_{\h}, \bk_{\h})$. Let $\circ_{\h} \colon 2\CRT \times 2\CRT \xrightarrow{\circ} 2\CRT \xrightarrow{h} 2\CRT_{\h}$ denote the composition of $\circ$ and $h$. 

\begin{theorem}
$(2\CRT_{\h}, \circ_{\h})$ is an associative commutative $\bQ$-algebra.
\end{theorem}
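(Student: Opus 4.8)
The plan is to realise $h$ as an algebra projection and then read off the three algebra axioms formally. Two preliminary facts are needed. First, $2\CRT_\h$ is a $\bQ$-subspace of $2\CRT$: in a harvestable pair an edge can carry index $0$ only if it joins a black vertex to a white \emph{child}, and by (H4)--(H5) such an edge can never be the whole path between two vertices of $V_\bullet$ (that white endpoint is interior, and the path must leave it along a child-edge, which is positive by (H4)); hence every harvestable index is essentially positive, so $\circ_\h$ restricts to a $\bQ$-bilinear map $2\CRT_\h\times 2\CRT_\h\to 2\CRT_\h$, $(a,b)\mapsto h(a\circ b)$. Second, $h$ is idempotent on $2\CRT_\h$: when $(X,\bk)$ is harvestable, conditions (H1)--(H5) force every step of Procedure~\ref{proc:harv} to act trivially, so $(X_\h,\bk_\h)=(X,\bk)$ and $h|_{2\CRT_\h}=\mathrm{id}$.

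Granting these, the theorem reduces to the identity
\[
h\bigl(h(x)\circ y\bigr)=h(x\circ y)=h\bigl(x\circ h(y)\bigr)\qquad(x,y\in 2\CRT),
\]
equivalently (using idempotency) to the statement that $\Ker h$ is an ideal of $(2\CRT,\circ)$. Indeed, commutativity of $\circ_\h$ is inherited from $\circ$ since $h(a\circ b)=h(b\circ a)$; the element $e\coloneqq h(\mathbf 1)\in 2\CRT_\h$, with $\mathbf 1$ the unit of $(2\CRT,\circ)$, is a unit for $\circ_\h$ because $e\circ_\h a=h\bigl(h(\mathbf 1)\circ a\bigr)=h(\mathbf 1\circ a)=h(a)=a$ for $a\in 2\CRT_\h$; and associativity follows from
\[
(a\circ_\h b)\circ_\h c=h\bigl(h(a\circ b)\circ c\bigr)=h\bigl((a\circ b)\circ c\bigr)=h\bigl(a\circ(b\circ c)\bigr)=h\bigl(a\circ h(b\circ c)\bigr)=a\circ_\h(b\circ_\h c),
\]
using the identity above at the second and fourth equalities and associativity of $\circ$ at the third.

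To prove the displayed identity it suffices, by $\bQ$-bilinearity, to show for generating pairs $(X,\bk),(Z,\bl)$ that the harvestable form of $(X,\bk)\circ(Z,\bl)$ depends only on $(X_\h,\bk_\h)$ and $(Z_\h,\bl_\h)$. The key observation is that $\circ$ glues the two trees along their roots, so, away from the glue vertex $\rt$, the pair $(X,\bk)\circ(Z,\bl)$ coincides with $X$, resp.\ $Z$, with its root deleted. Since every elementary move of Procedure~\ref{proc:harv} -- a contraction (\cite[Proposition 3.4]{OSY20}), a join (\cite[Proposition 3.5]{OSY20}), or the insertion of a white vertex -- is supported on a single edge or on two adjacent edges, the moves carrying $(X,\bk)$ to $(X_\h,\bk_\h)$ and $(Z,\bl)$ to $(Z_\h,\bl_\h)$ can be replayed inside $(X,\bk)\circ(Z,\bl)$, bringing it to $(X_\h,\bk_\h)\circ(Z_\h,\bl_\h)$; the moves still available then involve only $\rt$ and its two neighbours (one in each lobe, since $\rt$ is a terminal of each of $X_\h,Z_\h$ by (H1)), i.e.\ a bounded number of further steps all incident to $\rt$. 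Hence $\bigl((X,\bk)\circ(Z,\bl)\bigr)_\h$ is a function of $(X_\h,\bk_\h)$ and $(Z_\h,\bl_\h)$, as required.

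The main obstacle is to make ``replay the moves inside the product'' rigorous. The old root of $X$ becomes an interior vertex of degree $\ge 2$ in $(X,\bk)\circ(Z,\bl)$, so whether it is a branched point -- which governs which moves of steps (i) and (iii) of Procedure~\ref{proc:harv} apply there -- can change relative to the situation in $X$ alone; one must check that any such discrepancy is absorbed into the final normalisation at $\rt$, by a short case analysis on the colours of the two neighbours of $\rt$ in $X_\h$ and $Z_\h$ and on the vanishing of the indices on the two edges at $\rt$. Performing the lobe-internal moves before the moves at $\rt$ is legitimate by the well-definedness of Procedure~\ref{proc:harv}, i.e.\ that the harvestable form is independent of the choices made while running it (\cite[Proposition 3.18]{OSY20}); this confluence lets the available moves be scheduled in any order. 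Once these two points are settled the displayed identity, and with it the theorem, follow; the degenerate case of the unit $h(\mathbf 1)$ plays no further role, by the formal argument above.
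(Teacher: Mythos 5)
Your formal skeleton is attractive and differs genuinely from the paper's argument: you reduce everything to the single identity $h\bigl(h(x)\circ y\bigr)=h(x\circ y)$ on $2\CRT$, from which associativity, commutativity and the unit follow purely formally. The two preliminary observations are correct and worth recording (a $0$-labelled edge in a harvestable pair runs from a black parent to a white child, and any path between two black vertices through it must continue along a child-edge of that white vertex, which is positive by (H4); hence $2\CRT_{\h}\subset 2\CRT$ and $h$ restricts to the identity on harvestable pairs). The difficulty is that the identity itself, which now carries the entire mathematical content of the theorem, is not proved. You argue it by ``replaying'' the moves that harvest each lobe inside the product and then finishing with a few moves at the glue vertex, but this rests on two points you explicitly leave open: (a) the case analysis at $\rt$ absorbing the fact that the old roots of $X$ and $Z$ change degree, colour-environment and branched-point status inside the product (note in particular that the step-(iv) insertion, which is one of the moves harvesting $X$, acts on \emph{all} children of the root and therefore cannot be ``replayed'' inside the product without affecting the other lobe); and (b) a confluence statement --- that Procedure~\ref{proc:harv} yields the same normal form under an arbitrary rescheduling of moves across its phases. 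Point (b) is not what \cite[Proposition 3.18]{OSY20} provides: that proposition defines one specific ordered algorithm and shows it terminates in a harvestable pair with the same truncated zeta values; it does not assert that interleaving lobe-internal moves ahead of root moves is legitimate. So as written the proof has a genuine gap precisely where the work is.

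By contrast, the paper never needs the identity for general $x\in 2\CRT$. It exploits that the generators of $2\CRT_{\h}$ are harvestable, so by (H1) each factor has a \emph{terminal} root; gluing then produces a completely explicit local picture at $\rt$ (one edge $l_j$ into each subtree $T_j$), and both bracketings are computed directly to give the same symmetric harvestable pair with a single white vertex adjacent to $T_1,T_2,T_3$ and to $\rt$ by a $0$-edge. This sidesteps both your open points: no confluence is needed because $h$ is only ever applied to these explicitly described trees, and the ``case analysis at the root'' collapses to one picture. If you want to keep your projection-formula strategy, the efficient fix is to prove the identity only in the case actually used --- $x$, $y$ products of harvestable pairs --- which puts you back at the paper's computation. (A last small caveat shared with the paper: the unit of $(2\CRT,\circ)$ is the one-vertex tree, whose root has degree $0$ and so does not literally satisfy (H1); your unit argument via $h(\mathbf 1)$ needs this degenerate case handled by convention.)
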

\begin{proof}
For $j=1, 2, 3$, let $(X_j, \bk_j)$ be the harvestable pairs of the following shape:
\begin{equation*}
\begin{tikzpicture}[thick]
\coordinate (T_j) at (0,1.5) node at (T_j) {\tiny $T_j$};
\coordinate (rt) at (0,0.0) node at (0,0) {};
\draw (0,1) -- node [left] {\tiny $l_j$} (rt);
\draw (T_j) circle [radius=14pt];
\fill (-0.1,-0.1) rectangle (0.1,0.1);
\end{tikzpicture}.
\end{equation*}
Note that since $(X_j, \bk_j)$ is harvestable, its root is terminal. Then, from the Procedure \ref{proc:harv} (ii) obtaining the harvestable pair, we see that $\bigl\{(X_1, \bk_1)\circ_{\h}(X_2, \bk_2)\bigr\} \circ_{\h} (X_3, \bk_3)$ coincide with the following harvestable pair:
\begin{equation*}
\left(
\begin{tikzpicture}[thick,baseline=0pt]
\coordinate (T_1) at (-1.6,1.4) node at (T_1) {\tiny $T_1$};
\coordinate (T_2) at (0.4,1.4) node at (T_2) {\tiny $T_2$};
\coordinate (T_3) at (1.4,0.6) node at (T_3) {\tiny $T_3$};
\coordinate (u_1) at (-0.6,0.6) node at (u_1) {};
\coordinate (u_2) at (0.4,-0.2) node at (u_2) {};
\coordinate (rt) at (0.4,-1.4)  node[left] at (rt) {\tiny $\rt$};
\draw              (-1.35,1.22) -- node [left] {\tiny $l_1$} (u_1);
\draw              (0.15,1.22) -- node [right] {\tiny $l_2$} (u_1);
\draw              (u_1) -- node [left] {\tiny $0$} (u_2);
\draw              (1.15,0.42) -- node [right] {\tiny $l_3$} (u_2);
\draw              (u_2) -- node [right] {\tiny $0$} (rt);
\draw (T_1) circle [radius=9pt];
\draw (T_2) circle [radius=9pt];
\draw (T_3) circle [radius=9pt];
\fill (0.3,-1.5) rectangle (0.5,-1.3);
\filldraw[fill=white] (u_1) circle [radius=0.7mm];
\filldraw[fill=white] (u_2) circle [radius=0.7mm];
\end{tikzpicture}
\right)_{\h}
= \ 
\begin{tikzpicture}[thick,baseline=0pt]
\coordinate (T_1) at (-1.2,0.5) node at (T_1) {\tiny $T_1$};
\coordinate (T_2) at (0.0,1.3) node at (T_2) {\tiny $T_2$};
\coordinate (T_3) at (1.2,0.5) node at (T_3) {\tiny $T_3$};
\coordinate (u) at (0.0,-0.3) node at (u) {};
\coordinate (rt) at (0.0,-1.3)  node[left] at (rt) {\tiny $\rt$};
\draw              (-0.95,0.32) -- node [left] {\tiny $l_1$} (u);
\draw              (0.0,0.98) -- node [right] {\tiny $l_2$} (u);
\draw              (0.95,0.32) -- node [right] {\tiny $l_3$} (u);
\draw              (u) -- node [right] {\tiny $0$} (rt);
\draw (T_1) circle [radius=9pt];
\draw (T_2) circle [radius=9pt];
\draw (T_3) circle [radius=9pt];
\fill (-0.1,-1.4) rectangle (0.1,-1.2);
\filldraw[fill=white] (u) circle [radius=0.7mm];
\end{tikzpicture}
\end{equation*}
It is easy to see that $(X_1, \bk_1)\circ_{\h}\bigl\{(X_2, \bk_2) \circ_{\h} (X_3, \bk_3)\bigr\}$ coincides with the harvestable pair above by the similar procedure. This means that $\circ_{\h}$ is associative. The commutativity is also proved by the same way. 
\end{proof}
%

\section{Symmetrization map on $2\CRT$}

In this section, we define the $t$-adic symmetrization map $\wPhi$ on $2\CRT\jump{t}$, and prove Theorem \ref{mainthm:calu_phi}.

\begin{definition}
We define a $\bQ$-linear map $\wPhi \colon 2\CRT\jump{t} \rightarrow 2\CRT\jump{t}$ by 
\begin{equation*}
\wPhi(X, \bk)
\coloneqq \sum_{v \in V_{\bullet}}
(-1)^{k_{P(\rt, v)}}
\sum_{\bl=(l_e) \in \bZ^{P(\rt,v)}_{\ge0}}b_{P(\rt, v)}\binom{\bk}{\bl}(X_v, \bk+\bl)t^{l_{P(\rt, v)}}
\end{equation*}
for a $2$-colored rooted tree $X=(V, E, \rt, V_{\bullet})$ with $\rt \in V_{\bullet}$ and an essentially positive index $\bk=(k_e)_{e \in E}$ on $X$. Similarly, for the same $X$ and $\bk$ above, define a $\bQ$-linear map $\Phi \colon 2\CRT \rightarrow 2\CRT$ by  
\begin{equation*}
\Phi(X, \bk)
\coloneqq \sum_{v \in V_{\bullet}}(-1)^{k_{P(\rt, v)}}(X_v, \bk).
\end{equation*}
Note that $\Phi(X, \bk)$ is the constant term of $\wPhi(X, \bk)$.
\end{definition}
\begin{example}\label{ex:Phi_of_EZ3}
Let $(X, \bk)$ be the pair in Example \ref{ex:2crt_EZ} of $r=3$. Then we have
\begin{equation*}
\Phi(X, \bk)=
(-1)^{k_1+k_2+k_3}
\begin{tikzpicture}[thick,baseline=0pt]
\coordinate (v_4) at (0,0.9) node[left] at (v_4) {\tiny $v_4$};
\coordinate (v_3) at (0,0.3) node[left] at (v_3) {\tiny $v_3$};
\coordinate (v_2) at (0,-0.3) node[left] at (v_2) {\tiny $v_2$};
\coordinate (v_1) at (0,-0.9) node[left] at (v_1) {\tiny $v_1$};
\draw (v_4) -- node [right] {\tiny $k_3$} (v_3);
\draw (v_3) -- node [right] {\tiny $k_2$} (v_2);
\draw (v_2) -- node [right] {\tiny $k_1$} (v_1);
\fill  (v_4) circle (2pt) (v_3) circle (2pt) (v_2) circle (2pt);
\fill (-0.1,-1.0) rectangle (0.1,-0.8);
\end{tikzpicture}
+(-1)^{k_2+k_3}
\begin{tikzpicture}[thick,baseline=0pt]
\coordinate (v_1) at (-0.2,0.0) node[left] at (v_1) {\tiny $v_1$};
\coordinate (v_2) at (0,-0.9) node[left] at (v_2) {\tiny $v_2$};
\coordinate (v_3) at (0.2,0.0) node[right] at (v_3) {\tiny $v_3$};
\coordinate (v_4) at (0.4,0.9) node[right] at (v_4) {\tiny $v_4$};
\draw (v_4) -- node [right] {\tiny $k_3$} (v_3);
\draw (v_3) -- node [right] {\tiny $k_2$} (v_2);
\draw (v_2) -- node [left] {\tiny $k_1$} (v_1);
\fill  (v_1) circle (2pt) (v_3) circle (2pt) (v_4) circle (2pt);
\fill (-0.1,-1.0) rectangle (0.1,-0.8);
\end{tikzpicture}
+(-1)^{k_3}
\begin{tikzpicture}[thick,baseline=0pt]
\coordinate (v_1) at (-0.4,0.9) node[left] at (v_1) {\tiny $v_1$};
\coordinate (v_2) at (-0.2,0.0) node[left] at (v_2) {\tiny $v_2$};
\coordinate (v_3) at (0.0,-0.9) node[right] at (v_3) {\tiny $v_3$};
\coordinate (v_4) at (0.2,0.0) node[right] at (v_4) {\tiny $v_4$};
\draw (v_4) -- node [right] {\tiny $k_3$} (v_3);
\draw (v_3) -- node [left] {\tiny $k_2$} (v_2);
\draw (v_2) -- node [left] {\tiny $k_1$} (v_1);
\fill  (v_1) circle (2pt) (v_2) circle (2pt) (v_4) circle (2pt);
\fill (-0.1,-1.0) rectangle (0.1,-0.8);
\end{tikzpicture}
+
\begin{tikzpicture}[thick,baseline=0pt]
\coordinate (v_1) at (0,0.9) node[left] at (v_1) {\tiny $v_1$};
\coordinate (v_2) at (0,0.3) node[left] at (v_2) {\tiny $v_2$};
\coordinate (v_3) at (0,-0.3) node[left] at (v_3) {\tiny $v_3$};
\coordinate (v_4) at (0,-0.9) node[left] at (v_4) {\tiny $v_4$};
\draw (v_1) -- node [right] {\tiny $k_1$} (v_2);
\draw (v_2) -- node [right] {\tiny $k_2$} (v_3);
\draw (v_3) -- node [right] {\tiny $k_3$} (v_4);
\fill  (v_1) circle (2pt) (v_2) circle (2pt) (v_3) circle (2pt);
\fill (-0.1,-1.0) rectangle (0.1,-0.8);
\end{tikzpicture}
\end{equation*}
\end{example}
\begin{definition}\label{def:sh_prod}
We define the shuffle product $\sh$ on $\frH$ as the $\bQ$-bilinear map $\sh \colon \frH \times \frH \rightarrow \frH$ defined by the following rules.
\begin{enumerate}
\item $a \sh 1=1\sh a=1$ for any $a \in \frH$.
\item $(a_1u_1)\sh(a_2u_2)=(a_1\sh a_2u_2)u_1+(a_1u_1 \sh a_2)u_2$ for any $a_1, a_2 \in \frH$ and $u_1, u_2 \in \{x, y\}$.
\end{enumerate}
\end{definition}

From \cite[Definition 3.16]{OSY20}, for a harvestable pair $(X, \bk)$, we constructed an element $w(X, \bk) \in \frH^1$ inductively as follows : 
\begin{procedure}\label{proc:w}
\begin{enumerate}
\item For $(X, \bk)$ in Example \ref{ex:2crt_EZ}, we define $w(X, \bk) \coloneqq z_{k_1}\cdots z_{k_r}$.
\item Let $(X, \bk)$ (resp. $(X_j, \bk_j)$) be given by the left hand side (resp. the right hand side) of the following diagrams.
\begin{equation*}
\begin{tikzpicture}[thick]
\coordinate (T_1) at (-1.5,4) node at (T_1) {\tiny $T_1$};
\coordinate (T_j) at (-0.4,4.5) node at (T_j) {\tiny $T_j$};
\coordinate (T_s) at (1.5,4) node at (T_s) {\tiny $T_s$};
\coordinate (v_0) at (0,2.6) node at (0,2.6) {};
\coordinate (v_1) at (0,2.0) node at (0,2.0) {};
\coordinate (v_2) at (0,1.4)  node at (0,1.4) {};
\coordinate (v_r) at (0,0.6) node at (0,0.6) {};
\coordinate (v_{r+1}) at (0,0.0) node at (0,0) {};
\draw              (-1.4,3.7) to [out=290,in=170] node [left] {\tiny $l_1$} (v_0);
\draw [dotted] (-1.21,4.2) -- (-0.75,4.45);
\draw              (-0.4,4.19) -- node [right] {\tiny $l_j$} (v_0);
\draw [dotted] (-0.05,4.5) -- (1.21,4.2);
\draw              (1.4,3.7) to [out=250, in=10] node [right] {\tiny $l_s$} (v_0);
\draw              (v_0) -- node [left] {\tiny $k'$} (v_1);
\draw              (v_1) -- node [left] {\tiny $k_1$} (v_2);
\draw [dotted] (v_2) -- (v_r);
\draw              (v_r) -- node [left] {\tiny $k_r$} (v_{r+1});
\draw (T_1) circle [radius=9pt];
\draw (T_j) circle [radius=9pt];
\draw (T_s) circle [radius=9pt];
\fill (v_1) circle (2pt) (v_2) circle (2pt) (v_r) circle (2pt);
\fill (-0.1,-0.1) rectangle (0.1,0.1);
\filldraw[fill=white] (v_0) circle [radius=0.7mm];
\end{tikzpicture}
\hspace{2cm}
\begin{tikzpicture}[thick]
\coordinate (T_j) at (0,1.5) node at (T_j) {\tiny $T_j$};
\coordinate (rt) at (0,0.0) node at (0,0) {};
\draw (0,1) -- node [left] {\tiny $l_j$} (rt);
\draw (T_j) circle [radius=14pt];
\fill (-0.1,-0.1) rectangle (0.1,0.1);
\end{tikzpicture}
\end{equation*}
Assume that $w_j=w(X_j, \bk_j)$ for $j=1, \ldots, s$ are already defined. Then we define 
\begin{equation*}
w(X, \bk) \coloneqq (w_1 \sh \cdots \sh w_s)x^{k'}z_{k_1}\cdots z_{k_r}.
\end{equation*}
\end{enumerate}
\end{procedure}
Then, we have $\zeta^{}_{\wcS, M}(X; \bk)=Z^{\sh}_{\wcS, M}\bigl(w(X, \bk)\bigr)$ from \cite[Theorem 3.17]{OSY20}, and $\zeta^{}_{M}(X; \bk)=Z_{M}\bigl(w(X, \bk)\bigr)$ from \cite[Proposition 3.2]{O17}. Here, $Z^{\sh}_{\wcS, M} \colon \frH^1 \rightarrow \bQ\jump{t}$ (resp.~$Z_{M} \colon \frH^1 \rightarrow \bQ$) is the $\bQ$-linear map defined by $Z^{\sh}_{\wcS, M}(z_{\bk})=\zeta^{\sh}_{\wcS, M}(\bk)$ (resp.~$Z_{M}(z_{\bk}) =\zeta^{}_{M}(\bk)$) for any index $\bk$. Note that $Z^{\sh}_{\wcS, M}=Z_{M} \circ \wphi$ by definition. 

\begin{example}\label{ex:hybrid}
For positive integers $k_1, k_2, k_3, k_4$ and a non-negative integer $l$, consider the following harvestable pair $(X, \bk)$.
\begin{equation*}
\begin{tikzpicture}[thick,baseline=-2pt]
\coordinate (v_1) at (-2.0,1.2) node[left] at (v_1) {\tiny $v_1$};
\coordinate (v_2) at (0.4,1.2) node[right] at (v_2) {\tiny $v_2$};
\coordinate (v_3) at (1.6,0.4) node[right] at (v_3) {\tiny $v_3$};
\coordinate (u') at (-0.8,0.4) node[above] at (u') {\tiny $u'$};
\coordinate (u) at (0.4,-0.4) node[above] at (u) {\tiny $u$};
\coordinate (rt) at (0.4,-1.2) node[left] at (rt) {\tiny $v_4$};
\draw (v_1) -- node [above] {\tiny $k_1$} (u');
\draw (v_2) -- node [above] {\tiny $k_2$} (u');
\draw (u') -- node [below] {\tiny $l$} (u);
\draw (v_3) -- node [below] {\tiny $k_3$} (u);
\draw (u) -- node [right] {\tiny $k_4$} (rt);
\fill (v_1) circle (2pt) (v_2) circle (2pt) (v_3) circle (2pt);
\fill (0.5,-1.3) rectangle (0.3,-1.1);
\filldraw[fill=white] (u) circle [radius=0.7mm];
\filldraw[fill=white] (u') circle [radius=0.7mm];
\end{tikzpicture}
\end{equation*}
By Procedure \ref{proc:w} (i), we have 
\begin{equation*}
w_i
\coloneqq
w\biggl(
\begin{tikzpicture}[thick,baseline=-2pt]
\coordinate (v_i) at (0.0,0.5) node[left] at (v_i) {\tiny $v_i$};
\coordinate (rt) at (0.0,-0.5) node at (rt) {};
\draw (v_i) -- node [right] {\tiny $k_i$} (rt);
\fill (v_i) circle (2pt);
\fill (-0.1,-0.6) rectangle (0.1,-0.4);
\end{tikzpicture}
\biggr)
=z_{k_i}
\end{equation*}
for $i=1,2,3$. Thus, by use of Procedure \ref{proc:w} (ii) twice, we have
\begin{equation*}
w(X, \bk)=\{(w_1 \sh w_2)x^{l}\sh w_3\}x^{k_4}=\{(z_{k_1} \sh z_{k_2})x^{l}\sh z_{k_3}\}x^{k_4}.
\end{equation*}
\end{example}
From Procedure \ref{proc:w}, we can regard that $w$ is the $\bQ$-linear map $w \colon 2\CRT_{\h} \rightarrow \frH^1$ which sends a harvestable pair $(X, \bk)$ to $w(X, \bk)$. We use the same symbol $w$ to the $\bQ$-linear extension $2\CRT_{\h}\jump{t} \rightarrow \frH^1\jump{t}$ of $w$.

Now, we recall the statement of our main theorem.

\begin{theorem}{(=Theorem \ref{mainthm:calu_phi})}\label{thm:comparison_phi}
For a $2$-colored rooted tree $X=(V, E, \rt, V_{\bullet})$ with $\rt \in V_{\bullet}$ and an essentially positive index $\bk=(k_e)_{e \in E}$ on $X$, we have
\begin{align*}
\wphi\bigl(w(X_{\h}, \bk_{\h})\bigr)
=\sum_{v \in V_{\bullet}}(-1)^{k_{P(\rt, v)}}\sum_{\bl \in \bZ^{P(\rt, v)}_{\ge0}}
b_{P(\rt, v)}\binom{\bk}{\bl}w\bigl(X_{v, \h}, (\bk+\bl)_{\h}\bigr)t^{l_{P(\rt, v)}}.
\end{align*}
\end{theorem}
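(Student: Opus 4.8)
The plan is to recognize the right-hand side as $\widetilde w\bigl(\wPhi(X,\bk)\bigr)$, where $\widetilde w\coloneqq w\circ h\colon 2\CRT\jump{t}\to\frH^1\jump{t}$ sends $(X,\bk)$ to $w(X_\h,\bk_\h)$; indeed, applying $\widetilde w$ termwise to $\wPhi(X,\bk)=\sum_{v\in V_\bullet}(-1)^{k_{P(\rt,v)}}\sum_{\bl}b_{P(\rt,v)}\binom{\bk}{\bl}(X_v,\bk+\bl)t^{l_{P(\rt,v)}}$ reproduces the stated sum, and each $(X_v,\bk+\bl)$ is again essentially positive (essential positivity concerns only paths between black vertices, hence is root-independent, and $\bk+\bl\ge\bk$ entrywise), so $\widetilde w$ is defined on it. Thus the theorem is the intertwining identity $\wphi\circ\widetilde w=\widetilde w\circ\wPhi$ on $2\CRT\jump{t}$.

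First I would reduce to harvestable pairs. The left-hand side $\wphi\bigl(w(X_\h,\bk_\h)\bigr)$ depends only on $h(X,\bk)$, so it suffices to show $\widetilde w\circ\wPhi=\widetilde w\circ\wPhi\circ h$, i.e. that the right-hand side is invariant under each harvesting move of Procedure \ref{proc:harv}. Inserting a white vertex together with an index-$0$ edge (moves (iii),(iv)) leaves $k_{P(\rt,v)}$ unchanged and, in the inner sum, kills the new summation variable because $\binom{l-1}{l}=\delta_{l,0}$; contracting or joining edges (moves (i),(ii)) replaces two edges of indices $k_1,k_2$ by one of index $k_1+k_2$, which is reconciled by the negative-binomial identity $\sum_{a+b=c}\binom{k_1+a-1}{a}\binom{k_2+b-1}{b}=\binom{k_1+k_2+c-1}{c}$ (together with $t^{a+b}=t^c$), and in both cases the corresponding rerooted-and-harvested pairs coincide. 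This part is routine but technical and leans on the structural results of \cite{OSY20}. It then remains to prove $\wphi\bigl(w(X,\bk)\bigr)=\widetilde w\bigl(\wPhi(X,\bk)\bigr)$ for harvestable $(X,\bk)$, where $\widetilde w(X,\bk)=w(X,\bk)$.

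For harvestable $(X,\bk)$ I would induct on $\#V$. In the base case $X$ is the linear all-black tree of Example \ref{ex:2crt_EZ} with a genuine index $\bk=(k_1,\dots,k_r)$, so $w(X,\bk)=z_{\bk}$; computing the rerooted pairs — rerooting at an interior vertex $v_{i+1}$ makes the root non-terminal, so move (iv) inserts exactly one white vertex, after which Procedure \ref{proc:w} gives $w\bigl(h(X_{v_{i+1}},\bk+\bl)\bigr)=z_{\bk_{[i]}}\sh z_{\overline{\bk^{[i]}+\bl}}$ — one finds that $\widetilde w\bigl(\wPhi(X,\bk)\bigr)$ is literally the defining expression \eqref{eq:sym_map_H1} for $\wphi(z_{\bk})$. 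For the inductive step, a harvestable $X$ that is not linear-all-black has a white vertex, and Procedure \ref{proc:w}(ii) writes $w(X,\bk)=(w_1\sh\cdots\sh w_s)x^{k'}z_{k_1}\cdots z_{k_r}$ with $w_j=w(X_j,\bk_j)$, each $X_j$ harvestable with fewer vertices. I would then compute $\wphi\bigl((w_1\sh\cdots\sh w_s)x^{k'}z_{k_1}\cdots z_{k_r}\bigr)$ by a formula that specializes to Theorem \ref{thm:BTT} (taking $s=r$, $w_j=z_{k_j}$, trivial trailing chain and $t=0$), and match it against $\widetilde w\bigl(\wPhi(X,\bk)\bigr)$ by splitting $V_\bullet$ into the root, the trailing-chain vertices $v_1,\dots,v_r$, and the black vertices inside each $T_j$; for vertices inside $T_j$ one observes that rerooting $X$ there is the same as rerooting $X_j$ with the remaining subtrees and the chain re-attached at the old root, so the induction hypothesis for $X_j$ applies.

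The main obstacle is the inductive step: obtaining the right formula for $\wphi\bigl((w_1\sh\cdots\sh w_s)x^{k'}z_{k_1}\cdots z_{k_r}\bigr)$ — equivalently, understanding how $\wphi$ interacts with the shuffle product and with right-concatenation by $x$ and $z_k$ — and then carrying out the termwise matching, reconciling the signs $(-1)^{k_{P(\rt,v)}}$, the binomial weights $b_{P(\rt,v)}\binom{\bk}{\bl}$ and the powers of $t$ across the three kinds of vertices $v$. Essentially all of the combinatorial content of the theorem lives there.
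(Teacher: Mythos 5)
Your proposal takes a genuinely different route from the paper, and it contains a genuine gap exactly where you flag it: the inductive step. Your plan requires an explicit formula for $\wphi\bigl((w_1\sh\cdots\sh w_s)x^{k'}z_{k_1}\cdots z_{k_r}\bigr)$ in terms of the $\wphi(w_j)$, i.e.\ a compatibility of $\wphi$ with the shuffle product and with right-concatenation. This is not a routine computation from the definition \eqref{eq:sym_map_H1}: $\wphi$ is defined monomial by monomial on the $z_{\bk}$, a shuffle product is a large sum of such monomials, and already the special case $s=r$, $w_j=z_{k_j}$, $t=0$ is Theorem \ref{thm:BTT}, whose proof in \cite{BTT21} is a nontrivial piece of work. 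In this paper the relevant shuffle identity (Remark \ref{rm:t-adicKaneko}) and the $t$-adic generalization of Theorem \ref{thm:BTT} (Corollary \ref{cor:t-adicBTT}) are both \emph{deduced from} the main theorem, not used to prove it, so you cannot borrow them without circularity. You would have to establish the needed generalization independently, and you give no argument for it; as you say yourself, ``essentially all of the combinatorial content of the theorem lives there.'' Until that is supplied, the induction does not close and the proposal is a plan rather than a proof. (The reduction to harvestable pairs and the base case are fine: the base case is indeed literally the definition of $\wphi(z_{\bk})$ after one application of move (iv), and the invariance of the right-hand side under the harvesting moves follows from $\binom{l-1}{l}=\delta_{l,0}$ and the Vandermonde-type identity for negative binomials, modulo the structural lemmas of \cite{OSY20}.)

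For comparison, the paper avoids this combinatorial difficulty entirely by an evaluation-and-injectivity argument. Both sides of the identity are shown to have the same image under $Z_M$ for every $M$: the left-hand side because $Z_M\circ\wphi\bigl(w(X_{\h},\bk_{\h})\bigr)=Z^{\sh}_{\wcS,M}\bigl(w(X_{\h},\bk_{\h})\bigr)=\zeta^{}_{\wcS,M}(X;\bk)$, and the right-hand side by Proposition \ref{prop:root_change_zeta}, which is a direct manipulation of the truncated sums (a change of variables in the summation followed by a negative-binomial expansion). The conclusion then follows from the injectivity of $Z\colon\frH^1_{\ast}\jump{t}\to(\bQ\jump{t})^{\bZ_{\ge0}}$ (Theorem \ref{thm:extYamamoto}, i.e.\ Yamamoto's theorem coefficientwise in $t$). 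If you want to salvage your direct approach, the honest assessment is that you would be reproving a strengthened form of Theorem \ref{thm:BTT} from scratch inside the inductive step; the paper's detour through truncated multiple harmonic sums is precisely what makes that unnecessary.
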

\begin{remark}\label{rm:comm_diag}
Let $h \colon 2\CRT\jump{t} \rightarrow 2\CRT_{\h}\jump{t}$ be the $\bQ\jump{t}$-linear extension. Then Theorem \ref{mainthm:calu_phi} means that the following diagram commutes. 
\begin{equation*}
\begin{tikzcd}
  2\CRT\jump{t} \ar[r, "h"] \arrow[d, "\wPhi"'] & 2\CRT_{\h}\jump{t} \ar[r, "w"] & \frH^1\jump{t} \ar[d, "\wphi"] \\
  2\CRT\jump{t} \ar[r, "h"] & 2\CRT_{\h}\jump{t} \ar[r, "w"'] & \frH^1\jump{t}
\end{tikzcd}
\end{equation*}
\end{remark}

For the proof of Theorem \ref{mainthm:calu_phi}, we need the following proposition for the multiple harmonic sums,  which is proved by the similar argument of \cite[Proposition 3.6]{OSY20}. 

\begin{proposition}\label{prop:root_change_zeta}
For a harvestable pair $(X, \bk)$, we have
\begin{align*}
\zeta^{\sh}_{\wcS, M}(X; \bk)
=\sum_{v \in V_{\bullet}}(-1)^{k_{P(\rt, v)}}\sum_{\bl \in \bZ^{P(\rt, v)}_{\ge0}}
b_{P(\rt, v)}\binom{\bk}{\bl}\zeta^{}_{M}\bigl(X_{v, \h}; (\bk+\bl)_{\h}\bigr)t^{l_{P(\rt, v)}}.
\end{align*}
\end{proposition}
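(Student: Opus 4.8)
The plan is to prove Proposition~\ref{prop:root_change_zeta} by direct manipulation of the multiple harmonic sums, mimicking the proof of \cite[Proposition 3.6]{OSY20} but keeping track of the $t$-variable. First I would recall that for a harvestable pair $(X,\bk)$ the defining sum $\zeta^{}_{\wcS, M}(X;\bk)=\sum_{u\in V_\bullet}\zeta^{}_M(X,u;\bk)$ runs over tuples $(m_v)_{v\in V_\bullet}$ with one negative coordinate $m_u$ (satisfying $-M<m_u<0$) and the remaining coordinates positive, summing to $0$; the summand is $\prod_{e\in E}\bigl(\sum_{v:\,e\in P(\rt,v)}(m_v+\delta_{u,v}t)\bigr)^{-k_e}$. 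The right-hand side, by contrast, involves $\zeta^{}_M(X_{v,\h};(\bk+\bl)_\h)$, which is an honest positive multiple harmonic sum for the re-rooted tree $X_v$ (whose root is now $v\in V_\bullet$).

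The core of the argument is a change of root: fix $u\in V_\bullet$ and consider the term $\zeta^{}_M(X,u;\bk)$. For an edge $e$, whether $e\in P(\rt,v)$ or $e\in P(u,v)$ differs precisely on the path $P(\rt,u)$: for $e\in P(\rt,u)$ one has $e\in P(\rt,v)\iff e\notin P(u,v)$, while off that path the two conditions agree. Hence, writing $S_e:=\sum_{v\in V_\bullet:\,e\in P(\rt,v)}m_v$, for an edge $e$ on $P(\rt,u)$ the denominator $S_e+\delta$-correction equals (up to sign and the $t$-shift) $-\bigl(\sum_{v:\,e\in P(u,v)}m_v\bigr)$ because $\sum_{v\in V_\bullet}m_v=0$. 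This is where the sign $(-1)^{k_{P(\rt,u)}}$ is produced — one factor $(-1)^{k_e}$ per edge on the path $P(\rt,u)$. The negative coordinate $m_u$ becomes, after re-rooting, the ``missing mass'' $M$ carried to infinity in the $M\to\infty$ sense, i.e. a new summation variable; and the term $\delta_{u,v}t$, which only affected denominators $S_e$ with $e\in P(\rt,u)$ (since $u$ contributes to $S_e$ exactly when $e\in P(\rt,u)$), gets absorbed: after the sign flip, the factor becomes $\bigl(k_e\text{-th power of }(\text{positive integer}) - t\bigr)^{-1}$, and expanding $(n-t)^{-k_e}=\sum_{l_e\ge0}\binom{k_e+l_e-1}{l_e}n^{-k_e-l_e}t^{l_e}$ via the negative binomial series produces exactly the sum over $\bl\in\bZ^{P(\rt,u)}_{\ge0}$ with coefficient $b_{P(\rt,u)}\binom{\bk}{\bl}$ and the monomial $t^{l_{P(\rt,u)}}$, where $\binom{k+l-1}{l}$ on the new edge weights matches the definition of $b_{P(\rt,v)}\binom{\bk}{\bl}$. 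What remains is to identify the resulting positive sum with $\zeta^{}_M\bigl(X_{u,\h};(\bk+\bl)_\h\bigr)$: re-rooting $X$ at $u$ may destroy harvestability (black branched points, non-terminal root), so one applies Procedure~\ref{proc:harv}; by \cite[Proposition 3.4, Proposition 3.5]{OSY20} the harvestable-form operations preserve the value $\zeta^{}_M$, so $\zeta^{}_M(X_u;\bk+\bl)=\zeta^{}_M(X_{u,\h};(\bk+\bl)_\h)$, matching the statement. Renaming $u$ as $v$ and summing over $v\in V_\bullet$ finishes the proof.

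I expect the main obstacle to be the bookkeeping of the $t$-shift interacting with the re-rooting, specifically verifying that $u$ contributes to $S_e$ exactly for $e\in P(\rt,u)$ so that the $\delta_{u,v}t$ correction sits in precisely the denominators that flip sign, and then that the negative binomial expansion reproduces $b_{P(\rt,u)}\binom{\bk}{\bl}t^{l_{P(\rt,u)}}$ with the new edge-weights $\bk+\bl$ consistent across the path (the edges of $P(\rt,u)$ in $X$ correspond to the edges of $P(u,\rt)$ in $X_u$ on which the index is incremented). A secondary point to check is the boundary behaviour of the summation range: in $\zeta^{}_M(X,u;\bk)$ the negative variable satisfies $-M<m_u<0$ and the positives are unconstrained above except through $\sum m_v=0$; after re-rooting, $-m_u$ becomes the total mass at the new root's side bounded by $M$, and one must confirm the constraint $\sum_{v\in V_\bullet} m_v=M$ in $\zeta^{}_M(X_u;\cdot)$ is exactly reproduced. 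Both of these are essentially the content of \cite[Proposition 3.6]{OSY20} with the $t$-deformation carried along, so once the edge/path combinatorics is set up carefully the verification is routine.
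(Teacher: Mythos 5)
Your proposal is correct and follows essentially the same route as the paper's proof: re-root at each $u\in V_\bullet$, use $\sum_v m_v=0$ to flip the denominators along $P(\rt,u)$ (producing the sign $(-1)^{k_{P(\rt,u)}}$), expand $(L-t)^{-k_e}$ by the negative binomial series to get $b_{P(\rt,u)}\binom{\bk}{\bl}t^{l_{P(\rt,u)}}$, and identify the remaining positive sum with $\zeta_M(X_{u,\h};(\bk+\bl)_\h)$ via the invariance of $\zeta_M$ under passage to harvestable form. The only cosmetic difference is that the paper cites \cite[Proposition 2.9]{O17} for that last identification where you invoke the operations of Procedure \ref{proc:harv}.
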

\begin{proof}
For $e \in E, u \in V_{\bullet}$ and $(m_v)_{v \in V_{\bullet}} \in I_M(V_{\bullet}, u)$, set
\begin{align*}
L_e(X, u; (m_v)_{v \in V_{\bullet}})
\coloneqq
\sum_{v \in V_{\bullet} \text{ s.t. }e \in P(\rt, v)}(m_v+\delta_{u,v}t).
\end{align*}
Suppose $u \neq \rt$. If $e \in P(\rt, u)$, we have 
\begin{equation}\label{eq:e_in_path}
L_e\bigl(X, u; (m_v)_{v \in V_{\bullet}}\bigr)
=t-L_e\bigl(X_u, u; (m_v)_{v \in V_{\bullet}}\bigr)
=t-\sum_{v \in V_{\bullet} \text{ s.t. } e \in P(u, v)}m_v,
\end{equation}
and if $e \not \in P(\rt, v)$, we have
\begin{equation}\label{eq:e_notin_path}
L_e\bigl(X, u;(m_v)_{v \in V_{\bullet}}\bigr)
=L_e\bigl(X_u, u;(m_v)_{v \in V_{\bullet}}\bigr)
=\sum_{v \in V_{\bullet} \text{ s.t. } e \in P(u, v)}m_v.
\end{equation}
By \eqref{eq:e_in_path} and \eqref{eq:e_notin_path}, we have
\begin{align*}
\zeta^{}_{M}(X, u; \bk)
&=\sum_{\substack{(m_v)_{v \in V_{\bullet}\setminus\{u\}} \in \bZ^{V_{\bullet} \setminus\{u\}}_{\ge1} \\ \sum_{v \neq u}m_v<M}}
\prod_{e \in P(u,v)}(t-L)^{-k_e}\prod_{e \not\in P(u, v)}L^{-k_e},
\end{align*}
where $L \coloneqq L_e(X_u, u; (m_v)_{v \in V_{\bullet}})$. By using
\begin{equation*}
(t-L)^{-k_e}=(-1)^{k_e}\sum_{l_e \ge0}\binom{k_e+l_e-1}{l_e}L^{-k_e-l_e}t^{l_e},
\end{equation*}
we obtain
\begin{equation}\label{eq:conclusion}
\begin{split}
\zeta^{}_{M}(X, u; \bk)
&=(-1)^{k_{P(\rt,u)}} \sum_{\bl=(l_e)\in \bZ^{P(\rt, u)}_{\ge0}}b_{P(\rt, u)}\binom{\bk}{\bl}t^{l_{P(\rt,u)}}\\
&\qquad\qquad\qquad\qquad \times\sum_{\substack{(m_v)_{v \in V_{\bullet}\setminus\{u\}}  \in \bZ^{V_{\bullet} \setminus\{u\}}_{\ge1} \\ \sum_{v \neq u}m_v<M}}
\prod_{e \in E}L^{-k_e-l_e}.
\end{split}
\end{equation}
By definition, the inner sum in \eqref{eq:conclusion} coincides with $\zeta^{}_{M}(X_u; \bk+\bl)$,
and then coincides with $\zeta^{}_{M}\bigl(X_{u, \h}; (\bk+\bl)_{\h}\bigr)$ from \cite[Proposition 2.9]{O17}.
Summing up \eqref{eq:conclusion} for all $u \in V_{\bullet}$, we obtain the desired formula.
\end{proof}

We use the same symbol $Z_{M}$ to write the extension of the map $Z_{M}$ to the $\bQ$-linear map $\frH^1\jump{t} \rightarrow \bQ\jump{t}$. Let $\ast \colon \frH^1 \times \frH^1 \rightarrow \frH^1$ be the harmonic product on $\frH^1$. It is known \cite[Theorem 2.1]{H97} that $\frH^1_{\ast} \coloneqq (\frH^1, \ast)$ is an associative commutative $\bQ$-algebra. The following theorem immediately follows from \cite[Theorem 3.1]{Y13}.

\begin{theorem}\label{thm:extYamamoto}
For $W\in \frH^1\jump{t}$, let $Z(W)$ denote the element $\bigl\{Z_M(W)\bigr\}_{M \ge0}$ of $(\bQ\jump{t})^{\bZ_{\ge0}}$. Then the resulting $\bQ\jump{t}$-algebra homomorphism $Z \colon \frH^1_{\ast}\jump{t} \rightarrow (\bQ\jump{t})^{\bZ_{\ge0}}$ is injective.
\end{theorem}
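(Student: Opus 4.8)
The final statement to prove is Theorem \ref{thm:extYamamoto}, the injectivity of the map $Z \colon \frH^1_{\ast}\jump{t} \rightarrow (\bQ\jump{t})^{\bZ_{\ge0}}$. The plan is to reduce this $t$-adic statement to the already-known injectivity result of Yamamoto (\cite[Theorem 3.1]{Y13}), which concerns the map $\frH^1_{\ast} \rightarrow \bQ^{\bZ_{\ge0}}$ sending $w$ to $\{Z_M(w)\}_{M\ge0}$, by a coefficient-wise argument in the variable $t$.

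First I would verify that $Z$ is indeed a $\bQ\jump{t}$-algebra homomorphism. Since $Z_M \colon \frH^1\jump{t} \rightarrow \bQ\jump{t}$ is the $\bQ\jump{t}$-linear extension of the map defined by $Z_M(z_{\bk}) = \zeta^{}_M(\bk)$, and since the multiple harmonic sum $\zeta^{}_M$ is well known to satisfy the harmonic (stuffle) product rule $\zeta^{}_M(\bk)\zeta^{}_M(\bl) = \zeta^{}_M(\bk \ast \bl)$ for each fixed $M$, the map $Z_M$ is a $\bQ\jump{t}$-algebra homomorphism from $\frH^1_{\ast}\jump{t}$ to $\bQ\jump{t}$; assembling these over all $M$ shows $Z$ is a $\bQ\jump{t}$-algebra homomorphism into the product ring $(\bQ\jump{t})^{\bZ_{\ge0}}$. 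This is the content that ``immediately follows'' from Yamamoto's theorem together with the $t$-adic bookkeeping.

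The core injectivity argument is a filtration/coefficient comparison in $t$. Suppose $W \in \frH^1_{\ast}\jump{t}$ is nonzero with $Z(W) = 0$, i.e. $Z_M(W) = 0$ in $\bQ\jump{t}$ for every $M \ge 0$. Write $W = \sum_{n \ge 0} W_n t^n$ with $W_n \in \frH^1$, and let $N$ be the smallest index with $W_N \neq 0$. Because $Z_M$ is $\bQ\jump{t}$-linear, the coefficient of $t^N$ in $Z_M(W)$ is exactly $Z_M(W_N)$ (the $t$-variable in $W$ and the $t$-variable internal to the coefficient ring are the same single indeterminate $t$, so extracting the lowest-order coefficient commutes with applying $Z_M$). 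Hence $Z_M(W_N) = 0$ for all $M$, which says precisely that $W_N$ lies in the kernel of the classical map $\frH^1_{\ast} \rightarrow \bQ^{\bZ_{\ge0}}$. By \cite[Theorem 3.1]{Y13} this kernel is trivial, so $W_N = 0$, contradicting the choice of $N$. Therefore $W = 0$ and $Z$ is injective.

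The main obstacle, and the point deserving care, is the coefficient-extraction step: one must check that the single indeterminate $t$ plays a consistent role, so that the $t^N$-coefficient of the power series $Z_M(W) \in \bQ\jump{t}$ equals $Z_M$ applied to the $t^N$-coefficient $W_N \in \frH^1$ of $W$. This is where the $\bQ\jump{t}$-linearity (as opposed to merely $\bQ$-linearity) of $Z_M$ is essential, since $Z_M$ must pull the scalar $t^n$ out of each term $W_n t^n$ unchanged. Once this compatibility is confirmed, the reduction to Yamamoto's result is immediate, and no further computation with explicit multiple harmonic sums is required.
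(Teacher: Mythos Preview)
Your proof is correct and follows essentially the same approach as the paper: write $W=\sum_{n\ge0}W_n t^n$, observe that $Z_M(W)=\sum_{n\ge0}Z_M(W_n)t^n$ by $\bQ\jump{t}$-linearity, and then apply Yamamoto's injectivity result \cite[Theorem~3.1]{Y13} coefficient-by-coefficient. The only cosmetic difference is that you phrase the argument via a minimal nonvanishing coefficient $W_N$, whereas the paper simply notes that every coefficient $W_n$ must vanish simultaneously; these are equivalent.
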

\begin{proof}
For $W=\sum_{n\ge0}w_nt^n \in \frH^1_{*}\jump{t}$, suppose that $Z_M(W)=\sum_{n \ge0}Z_{M}(w_n)=0$ for all $M \ge0$. Then, for each $n$, we have $Z_M(w_n)=0$ for all $M\ge0$. Thus, from \cite[Theorem 3.1]{Y13}, we see that $w_n=0$ for all $n \ge0$, which gives $W=0$.
\end{proof}
\begin{proof}[Proof of Theorem \ref{thm:comparison_phi}]
From \cite[Proposition 3.18]{OSY20}, we have $\zeta^{}_{\wcS,M}(X;\bk)=\zeta^{}_{\wcS, M}(X_{\h}, \bk_{\h})=Z^{\sh}_{\wcS, M}\bigl(w\bigl(X_{\h}, \bk_{\h}\bigr)\bigr)=Z_{M}\circ \wphi\bigl(w\bigl(X_{\h}, \bk_{\h}\bigr)\bigr)$. Since Proposition \ref{prop:root_change_zeta} holds for all $M\ge0$ and any pair $(X, \bk)$ in $2\CRT$, the statement follows immediately from Theorem \ref{thm:extYamamoto}.
\end{proof}
\begin{remark}\label{rm:t-adicKaneko}
By using Theorem \ref{thm:extYamamoto} and \cite[Theorem 3.10]{OSY20}, we obtain the following equality in $\frH^1\jump{t}$, which gives the shuffle relation for $\wcS$-MZV (\cite{J21}.~cf.~\cite[Corollary 3.11]{OSY20}). That is, for indices $\bk, \bl$, we have
\begin{equation*}\label{eq:sh_frH^1t}
\wphi(z_{\bk} \sh z_{\bl})
=(-1)^{\wt(\bl)}
\sum_{\bl' \in \bZ^{\dep(\bl)}_{\ge0}}b\binom{\bl}{\bl'}\wphi(z_{\bk}z_{\overline{\bl+\bl'}})t^{\wt(\bl')}.
\end{equation*}
In particular, we have
\begin{equation*}\label{eq:sh_frH^1}
\phi(z_{\bk}\sh z_{\bl})=(-1)^{\wt(\bl)}\phi(z_{\bk} z_{\overline{\bl}}),
\end{equation*}
which is equivalent to \cite[Proposition 9.7]{Kan19}. We obtain the analogous identity of \cite[Proposition 3.4]{Kam16} in $\frH^1$ by using \cite[Theorem 3.1]{Y13}.
\end{remark}
%

Now we obtain a $t$-adic generalization of Theorem \ref{thm:BTT} (\cite[Theorem 4.6]{BTT21}).
\begin{corollary}\label{cor:t-adicBTT}
For positive integers $k_1, \ldots, k_{r+1}$, we have
\begin{align*}
\wphi\bigl((z_{k_1}\sh \cdots \sh z_{k_r})x^{k_{r+1}}\bigr)
&=(z_{k_1}\sh \cdots \sh z_{k_r})x^{k_{r+1}}\\
&\quad +\sum_{i=1}^{r}(-1)^{k_i+k_{r+1}}\sum_{l, l' \ge0}\binom{k_i+l-1}{l}\binom{k_{r+1}+l'-1}{l'}\\
&\qquad \qquad \times(z_{k_1}\sh \cdots \sh \check{z}_{k_{i}} \sh \cdots \sh z_{k_r} \sh z_{k_{r+1}+l'})x^{k_i+l}t^{l+l'}.
\end{align*}
Recall that the symbol $\check{z}_{k_i}$ means that the factor $z_{k_i}$ is skipped. 
\end{corollary}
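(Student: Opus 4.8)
The plan is to derive Corollary \ref{cor:t-adicBTT} as a direct specialization of Theorem \ref{thm:comparison_phi}. First I would recall from \cite[Definition 4.5, Proposition 4.8]{OSY20} that the element $(z_{k_1}\sh\cdots\sh z_{k_r})x^{k_{r+1}}$ equals $w(X_{\h},\bk_{\h})$ for the explicit $2$-colored rooted tree $X$ in which $r$ leaves $v_1,\dots,v_r$ (all in $V_{\bullet}$) are attached to a single white branch vertex $u$, which is connected to the root $v_{r+1}\in V_{\bullet}$ by an edge carrying index $k_{r+1}$; each leaf edge $\{v_i,u\}$ carries index $k_i$. (This is the $s=r$, all-$w_j=z_{k_j}$, $k'=0$, no trailing $z$'s case of Procedure \ref{proc:w}.) One checks this $\bk$ is essentially positive since every entry is a positive integer, so Theorem \ref{thm:comparison_phi} applies.

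Next I would enumerate the sum over $V_{\bullet}=\{v_1,\dots,v_r,v_{r+1}\}$ on the right-hand side of Theorem \ref{thm:comparison_phi}. The term $v=v_{r+1}=\rt$ contributes the path $P(\rt,\rt)=\varnothing$, hence no sign, empty $\bl$, and $w(X_{v_{r+1},\h},\bk_{\h})=w(X_{\h},\bk_{\h})=(z_{k_1}\sh\cdots\sh z_{k_r})x^{k_{r+1}}$: this gives the first line of the corollary. For $v=v_i$ with $1\le i\le r$, the path $P(\rt,v_i)$ consists of exactly two edges, $\{v_{r+1},u\}$ with index $k_{r+1}$ and $\{u,v_i\}$ with index $k_i$, so the sign is $(-1)^{k_i+k_{r+1}}$, the sum over $\bl\in\bZ^{P(\rt,v_i)}_{\ge0}$ is a sum over $(l,l')$ with weights $b_{P(\rt,v_i)}\binom{\bk}{\bl}=\binom{k_i+l-1}{l}\binom{k_{r+1}+l'-1}{l'}$ and $t$-power $t^{l+l'}$, matching the corollary's coefficients. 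The remaining point is to identify $w(X_{v_i,\h},(\bk+\bl)_{\h})$: rerooting $X$ at the leaf $v_i$ makes $v_i$ the new root, so now $v_i$ and its adjacent white vertex $u$ sit along the root edge with index $k_i+l$, while $u$ additionally carries the leaf $v_{r+1}$ (now a black leaf with edge index $k_{r+1}+l'$) and the leaves $\{v_j\}_{j\ne i}$ with edge indices $k_j$. Running Procedure \ref{proc:w} on the harvestable form of this rerooted tree yields exactly $(z_{k_1}\sh\cdots\sh\check{z}_{k_i}\sh\cdots\sh z_{k_r}\sh z_{k_{r+1}+l'})x^{k_i+l}$, and summing over $i$ and $(l,l')$ produces the second and third lines.

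The main obstacle I anticipate is the bookkeeping in the previous paragraph — carefully verifying that the rerooted tree $X_{v_i}$, after passing to its harvestable form via Procedure \ref{proc:harv}, is the tree whose $w$-value is the claimed shuffle element, and in particular that the new leaf coming from the old root $v_{r+1}$ contributes the factor $z_{k_{r+1}+l'}$ rather than, say, an $x$-power. This amounts to checking that in $(\bk+\bl)$ the only edges with possibly vanishing index are those one would expect, that the white vertex $u$ remains a (white) branch point so no contraction or joining from Procedure \ref{proc:harv} (i),(ii) is triggered, and that the new root edge $\{v_i,u\}$ has positive index $k_i+l\ge k_i\ge 1$ so condition (H4) is already satisfied. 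Once this structural identification is in hand, everything else is a transcription of Theorem \ref{thm:comparison_phi}. I would also remark, as a sanity check, that setting $t=0$ collapses the $(l,l')$ sums to $(0,0)$ and recovers Theorem \ref{thm:BTT} (after noting $(z_{k_1}\sh\cdots\sh z_{k_r})x^{k_{r+1}}$ is the $i=r+1$ term and $(z_{k_1}\sh\cdots\sh\check{z}_{k_i}\sh\cdots\sh z_{k_r}\sh z_{k_{r+1}})x^{k_i}$ the $i\le r$ terms of \eqref{eq:phi_MT}).
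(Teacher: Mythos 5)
Your argument is correct and is essentially the paper's own proof: the paper introduces exactly the same star-shaped tree $(X,\bk)$ (with $r$ black leaves $v_1,\dots,v_r$ joined to a white vertex $u$ attached to the root $v_{r+1}$), applies Theorem \ref{mainthm:calu_phi}, and identifies $w(X_{v_{r+1}},\bk)=(z_{k_1}\sh\cdots\sh z_{k_r})x^{k_{r+1}}$ and $w(X_{v_i},\bk_i(l,l'))=(z_{k_1}\sh\cdots\sh\check{z}_{k_i}\sh\cdots\sh z_{k_r}\sh z_{k_{r+1}+l'})x^{k_i+l}$ via Procedure \ref{proc:w}, just as you do. One parenthetical slip: in the notation of Procedure \ref{proc:w}(ii) the root edge of $X$ carries $k'=k_{r+1}$ (with an empty trailing chain), not $k'=0$; your description of the tree and the resulting $w$-values is nonetheless correct.
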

\begin{proof}
Let $(X, \bk)$ be the pair illustrated as follows.
\begin{equation*}
\begin{tikzpicture}[thick]
\coordinate (v_1) at (-1,1.5) node at (v_1) [above left] {\tiny $v_1$};
\coordinate (v_2) at (-0.5,2.0) node at (v_2) [above] {\tiny $v_2$};
\coordinate (v_r) at (1,1.5) node at (v_r) [right] {\tiny $v_r$};
\coordinate (w) at (0,0.8) node at (w) [below left] {\tiny $u$};
\coordinate (v_{r+1}) at (0,0) node at (v_{r+1}) [left] {\tiny $v_{r+1}$};
\draw (v_1) -- node [left] {\tiny $k_1$} (w);
\draw (v_2) -- node [above right] {\tiny $k_2$} (w);
\draw (v_r) -- node [right] {\tiny $k_r$} (w);
\draw (w) -- node [right] {\tiny $k_{r+1}$} (v_{r+1});
\draw [dotted] (-0.3,2.0) -- (0.85,1.6);
\fill (v_1) circle (2pt) (v_2) circle (2pt) (v_r) circle (2pt);
\fill (-0.1,-0.1) rectangle (0.1,0.1);
\filldraw[fill=white] (w) circle[radius=0.7mm];
\end{tikzpicture}
\end{equation*}
For non-negative integers $l, l'$ and $1\le i \le r$, set $\bk_i(l, l') \coloneqq (k_1, \ldots, k_{i-1}, k_{i+1}, \ldots, k_r, k_{r+1}+l', k_i+l)$. Then we have
\begin{align*}
\wPhi(X, \bk)=(X_{v_{r+1}}, \bk)
+\sum_{i=1}^{r}\sum_{l, l' \ge0}\binom{k_i+l-1}{l}\binom{k_{r+1}+l'-1}{l'}
(X_{v_i}, \bk_i(l, l'))t^{l+l'}.
\end{align*}
Note that $(X, \bk)=(X_{v_{r+1}}, \bk)$. Since $k_1, \ldots, k_{r+1}$ are positive, $(X, \bk)$ and $(X_{v_i}, \bk_i(l, l')) \ (1\le i \le r, l, l' \ge0)$ are harvestable. Thus by Procedure \ref{proc:w}, we have
\begin{align*}
w(X, \bk)
&=(z_{k_1} \sh \cdots \sh z_{k_r})x^{k_{r+1}},\\
w(X_{v_i}, \bk_i(l, l'))
&=(z_{k_1}\sh \cdots \sh \check{z}_{k_{i}} \sh \cdots \sh z_{k_r} \sh z_{k_{r+1}+l'})x^{k_i+l}.
\end{align*}
Therefore, by Theorem \ref{mainthm:calu_phi}, we obtain the desired formula.
\end{proof}

The next proposition shows that there exists a $2$-colored rooted tree whose image by the symmetrization map $\Phi$ vanishes.

\begin{proposition}\label{prop:sym_tree}
Let $X=(V, E, \rt, V_{\bullet})$ be a $2$-colored rooted tree with $\rt \in V_{\bullet}$. Assume the following conditions.
\begin{enumerate}
\item There exist subsets $V_i \subset V, E_i \subset E, V_{i, \bullet} \subset V_{\bullet}\ (i=1,2)$ and an edge $f \in E$ satisfying
\begin{equation*}
V=V_1 \sqcup V_2, \quad E=E_1\sqcup E_2 \sqcup \{f\}, \quad V_{\bullet}=V_{1, \bullet} \sqcup V_{2, \bullet}.
\end{equation*}
\item $\rt \in V_{1, \bullet}$ and there exists an isomorphism $F \colon (V_1, E_1, \rt) \rightarrow (V_2, E_2, F(\rt))$ of rooted trees satisfying that for $v \in V_{\bullet}$, $v$ is an element in $V_{1, \bullet}$ if and only if $F(v)$ is an element in $V_{2, \bullet}$. 
\item For an essentially positive index $\bk=(k_e)_{e\in E} \in \bZ^{E}_{\ge0}$ on $X$, $k_f$ is odd and $k_{\{a, b\}}=k_{\{F(a), F(b)\}}$ for any $e=\{a, b\} \in E$.
\end{enumerate}
Then, we have
\begin{equation*}
\Phi(X, \bk)=0.
\end{equation*}
In particular, we have
\begin{equation} \label{eq:phi_vanish}
\phi\bigl(w(X_{\h}, \bk_{\h})\bigr)=0.
\end{equation}
\end{proposition}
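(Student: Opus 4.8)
The plan is to deduce Proposition \ref{prop:sym_tree} directly from Theorem \ref{mainthm:calu_phi} by exhibiting an explicit involution on the set $V_{\bullet}$ of black vertices that pairs up the terms of $\Phi(X,\bk)=\sum_{v\in V_{\bullet}}(-1)^{k_{P(\rt,v)}}(X_v,\bk)$ with opposite signs and identical trees. The natural candidate is the involution $\sigma\colon V_{\bullet}\to V_{\bullet}$ induced by the isomorphism $F$, namely $\sigma|_{V_{1,\bullet}}=F$ and $\sigma|_{V_{2,\bullet}}=F^{-1}$; condition (ii) guarantees this is a well-defined fixed-point-free involution on $V_{\bullet}$ (note $\rt$ and $F(\rt)$ are distinct, so there is no fixed point). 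So the first step is to set up $\sigma$ carefully and record that it has no fixed points.

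Next I would show that for each $v\in V_{\bullet}$ the two summands indexed by $v$ and $\sigma(v)$ cancel. This has two parts. First, the trees agree: $(X_v,\bk)=(X_{\sigma(v)},\bk)$ as elements of $2\CRT$. This is where the symmetry hypotheses do the work — relabelling $V$ by $F$ on $V_1$ and $F^{-1}$ on $V_2$, together with the fact that $F$ swaps the two halves and fixes the ``bridge'' edge $f$, gives a rooted-tree isomorphism $(V,E,v,V_{\bullet})\cong(V,E,\sigma(v),V_{\bullet})$ that is compatible with the index $\bk$ by condition (iii) ($k_{\{a,b\}}=k_{\{F(a),F(b)\}}$ and $f$ is fixed); by the non-planarity convention recalled in the Remark after Example \ref{ex:2crt_EZ}, these represent the same element of $2\CRT$. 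Second, the signs are opposite: I must check $(-1)^{k_{P(\rt,v)}}=-(-1)^{k_{P(\rt,\sigma(v))}}$, i.e.\ $k_{P(\rt,v)}$ and $k_{P(\rt,\sigma(v))}$ have opposite parity. For $v\in V_{2,\bullet}$ the path $P(\rt,v)$ runs from $\rt\in V_1$ across $f$ into $V_2$, while $P(\rt,\sigma(v))=P(\rt,F^{-1}(v))$ stays inside $V_1$; using $F$ to identify the $V_2$-portion of $P(\rt,v)$ with (part of) a path in $V_1$ and invoking (iii), one sees $k_{P(\rt,v)}=k_f+\big(\text{something equal to }k_{P(\rt,\sigma(v))}\text{ or to }k_{P(\rt,F(\rt))}-k_{P(\rt,\sigma(v))}\big)$; in any case the difference of the two path-weights is $\pm k_f$ modulo an even quantity, and since $k_f$ is odd the parities differ. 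For $v\in V_{1,\bullet}$ the same computation applies with the roles of $v$ and $\sigma(v)$ reversed. Summing the cancelling pairs over $\sigma$-orbits yields $\Phi(X,\bk)=0$.

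Finally, the ``in particular'' statement $\phi(w(X_{\h},\bk_{\h}))=0$ is immediate: by Theorem \ref{mainthm:calu_phi} (the $t=0$ specialization, or equivalently the commutative diagram in Remark \ref{rm:comm_diag} restricted to constant terms), $\phi(w(X_{\h},\bk_{\h}))=w\circ h(\Phi(X,\bk))=w\circ h(0)=0$.

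I expect the main obstacle to be the bookkeeping in the sign computation: tracking exactly how $P(\rt,v)$ decomposes relative to the bridge edge $f$ and the two subtrees, and verifying via $F$ and hypothesis (iii) that the ``extra'' contribution to the path-weight is congruent to $k_f$ mod $2$. The tree-equality step is conceptually routine given the non-planarity convention, but one should be slightly careful to phrase the isomorphism $(V,E,v,V_{\bullet})\cong(V,E,\sigma(v),V_{\bullet})$ so that it is literally an isomorphism of $2$-colored rooted trees (respecting $V_{\bullet}$), which is exactly what the second half of condition (ii) secures. A clean way to organize both steps at once is to observe that the global relabelling map $G\colon V\to V$ defined by $G|_{V_1}=F$, $G|_{V_2}=F^{-1}$ is an automorphism of the underlying $2$-colored tree $(V,E,V_{\bullet})$ (ignoring the root) that is index-preserving and satisfies $G(\rt)=F(\rt)\ne\rt$; then $\sigma(v)=G(v)$ transports $(X_v,\bk)$ to $(X_{G(v)},\bk)$, and the sign flip comes from $k_{P(\rt,v)}+k_{P(\rt,G(v))}\equiv k_f\pmod 2$, which one proves by noting $P(\rt,G(v))=G(P(G^{-1}(\rt),v))$ and $G^{-1}(\rt)=F^{-1}(\rt)$ lies in $V_2$, so the symmetric difference of the edge sets of $P(\rt,v)$ and $P(\rt,G(v))$ is, up to $G$-symmetric even contributions, exactly $\{f\}$.
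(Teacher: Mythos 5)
Your proposal is correct and takes essentially the same route as the paper: the paper likewise cancels the $V_{1,\bullet}$-sum against the $V_{2,\bullet}$-sum by reindexing via $F$, using $(X_v,\bk)=(X_{F(v)},\bk)$ (from the symmetry hypotheses and non-planarity) together with $k_{P(\rt,F(v))}\equiv k_{P(\rt,v)}+k_f\pmod 2$, and your involution $\sigma$ is just a repackaging of that pairing. Your ``modulo an even quantity'' treatment of the parity step is, if anything, slightly more careful than the paper's literal equality $k_{P(\rt,u)}+k_{P(u,v)}=k_{P(\rt,v)}$ (which only holds mod $2$ in general); the one small slip is that $G^{-1}(\rt)=F(\rt)$, not $F^{-1}(\rt)$, though it still lies in $V_2$ as you need.
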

\begin{remark}\label{rm:from OSY}
\eqref{eq:phi_vanish} follows immediately from Theorem \ref{thm:extYamamoto} and \cite[Proposition 3.6]{OSY20}.
\end{remark}
\begin{proof}
Write $f=\{u, F(u)\}$. Then we can illustrate $(X, \bk)$ as follows.
\begin{equation*}
\begin{tikzpicture}[thick,baseline=0pt]
\coordinate (T_1) at (-1.3,0.0) node at (T_1) {\tiny $T_1$};
\coordinate (T_2) at (1.3,0.0) node at (T_2) {\tiny $T_2$};
\coordinate (u) at (-0.65,0.0) node[below] at (u) {\tiny $u$};
\coordinate (u') at (0.5,0.0) node[below] at (u') {\tiny $F(u)$};
\draw (-0.775,0.0) -- node [above] {\tiny $k_{f}$} (0.775,0.0);
\draw (-0.775,0.0) -- node {$\times$} (-0.775,0.0);
\draw (0.775,0.0) -- node {$\times$} (0.775,0.0);
\draw (T_1) circle [radius=15pt];
\draw (T_2) circle [radius=15pt];
\end{tikzpicture}
\end{equation*}
Here, we use the symbol $\times$ to indicate a vertex which may or may not belong to $V_\bullet$ and 
may or may not be the root, and we set $T_i=(V_i, E_i) \ (i=1,2)$. Since $k_{P(u, v)}=k_{P(F(u), F(v))}$ for $v \in V_{1,\bullet}$ from the assumption on $\bk$, we have 
\begin{align*}
k_{P(\rt, F(v))}
&=k_{P(\rt, u)}+k_f+k_{P(F(u), F(v))}\\
&=k_{P(\rt, u)}+k_f+k_{P(u,v)}=k_{P(\rt, v)}+k_f.
\end{align*}
Since $k_f$ is odd, we have
\begin{equation*}
(-1)^{k_{P(\rt, F(v))}}=-(-1)^{k_{P(\rt, v)}}
\end{equation*}
for any $v \in V_{1, \bullet}$. Moreover, from the assumption on $(X, \bk)$ and the non-planarity of rooted trees, we have $(X_v, \bk)=(X_{F(v)}, \bk)$ for any $v \in V_{1, \bullet}$.
Thus we obtain
\begin{align*}
\Phi(X, \bk)
&=\sum_{v \in V_{1, \bullet}}(-1)^{k_{P(\rt, v)}}(X_v, \bk)
+\sum_{v \in V_{2, \bullet}}(-1)^{k_{P(\rt, v)}}(X_v, \bk)\\
&=\sum_{v \in V_{1, \bullet}}(-1)^{k_{P(\rt, v)}}(X_v, \bk)
+\sum_{v \in V_{1, \bullet}}(-1)^{k_{P(\rt, F(v))}}(X_{F(v)}, \bk)\\
&=\sum_{v \in V_{1, \bullet}}(-1)^{k_{P(\rt, v)}}(X_v, \bk)
-\sum_{v \in V_{1, \bullet}}(-1)^{k_{P(\rt, v)}}(X_v, \bk)=0,
\end{align*}
which completes the proof.
\end{proof}

\begin{example}
Consider the harvestable pair $(X, \bk)$ in Example \ref{ex:hybrid}.
\begin{equation*}
\begin{tikzpicture}[thick,baseline=-2pt]
\coordinate (v_1) at (-2.0,1.2) node[left] at (v_1) {\tiny $v_1$};
\coordinate (v_2) at (0.4,1.2) node[right] at (v_2) {\tiny $v_2$};
\coordinate (v_3) at (1.6,0.4) node[right] at (v_3) {\tiny $v_3$};
\coordinate (u') at (-0.8,0.4) node[above] at (u') {\tiny $u'$};
\coordinate (u) at (0.4,-0.4) node[above] at (u) {\tiny $u$};
\coordinate (rt) at (0.4,-1.2) node[left] at (rt) {\tiny $v_4$};
\draw (v_1) -- node [above] {\tiny $k_1$} (u');
\draw (v_2) -- node [above] {\tiny $k_2$} (u');
\draw (u') -- node [below] {\tiny $l$} (u);
\draw (v_3) -- node [below] {\tiny $k_3$} (u);
\draw (u) -- node [right] {\tiny $k_4$} (rt);
\fill (v_1) circle (2pt) (v_2) circle (2pt) (v_3) circle (2pt);
\fill (0.5,-1.3) rectangle (0.3,-1.1);
\filldraw[fill=white] (u) circle [radius=0.7mm];
\filldraw[fill=white] (u') circle [radius=0.7mm];
\end{tikzpicture}
\end{equation*}
Here we set $k_i=k_{e_i}$ and $l=k_f$.
Set $V_{1, \bullet} \coloneqq \{v_3, v_4\}, =V_{2, \bullet} \coloneqq \{v_1, v_2\}$, $V_1 \coloneqq \{u\} \sqcup V_{1,\bullet}, V_2 \coloneqq \{u'\}\sqcup V_{2, \bullet}$, 
and $E_1 \coloneqq \{e_3, e_4\}, E_2 \coloneqq \{e_1, e_2\}$.
Then we see that if $k_1=k_4, k_2=k_3$ and $l$ is odd, the pair $(X, \bk)$ satisfies the assumptions of Proposition \ref{prop:sym_tree}. Indeed, define the map of trees $F \colon (V_1, E_1) \rightarrow (V_2, E_2)$ by $F(v_3)=v_2, F(v_4)=v_1$ and $F(u)=u'$. Then $F$ is an isomorphism $(V_1, E_1, v_4) \rightarrow (V_2, E_2, v_1)$ of rooted trees. Therefore, by Proposition \ref{prop:sym_tree}, we have $\Phi(X, \bk)=0$, in particular, $\phi\bigl(w(X, \bk)\bigr)=0$. 

On the other hand, $\phi\bigl(w(X, \bk)\bigr)=0$ is obtained by the explicit calculation as follows.
By Theorem \ref{mainthm:calu_phi} and Example \ref{ex:hybrid}, we have
\begin{align*}
&\phi\bigl(w(X, \bk)\bigr)=\phi\bigl(\{(z_{k_1}\sh z_{k_2})x^l\sh z_{k_3} \}x^{k_4}\bigr)\\
&=(-1)^{k_1+l+k_4}\{(z_{k_3}\sh z_{k_4})x^l\sh z_{k_2}\}x^{k_1}
+(-1)^{k_2+l+k_4}\{(z_{k_3}\sh z_{k_4})x^l\sh z_{k_1} \}x^{k_2}\\
&\quad +(-1)^{k_3+k_4}\{(z_{k_1}\sh z_{k_2})x^l\sh z_{k_4}\}x^{k_3}
+\{(z_{k_1}\sh z_{k_2})x^l\sh z_{k_3} \}x^{k_4}.
\end{align*}
Thus, if $k_4=k_1, k_2=k_3$ and $l$ is odd, we obtain $\phi\bigl(w(X, \bk)\bigr)=0$.
\end{example}

\begin{remark}
Remark \ref{rm:t-adicKaneko}, Corollary \ref{cor:t-adicBTT} and Proposition \ref{prop:sym_tree} give some non-trivial elements in $\Ker\wPhi$ or $\Ker\wphi$. It seems interesting to study the explicit description of $\Ker\wPhi$ and $\Ker\wphi$ (or $\Ker \Phi$ and $\Ker \phi$).
\end{remark}



\begin{thebibliography}{HMO}

\bibitem{BTT21}
H.~Bachmann, Y.~Takeyama, K.~Tasaka,
\emph{Finite and symmetric Mordell-Tornheim multiple zeta values},
to appear in J.\ Math.\ Soc.\ Japan.  DOI: 10.2969/jmsj/84348434
\bibitem{HMO21}
M. Hirose, H. Murahara, M. Ono,
\emph{On variants of symmetric multiple zeta-star values and the cyclic sum formula},
Ramanujan J. (2021). https://doi.org/10.1007/s11139-020-00341-3
\bibitem{H97}{M. E. Hoffman},
The algebra of multiple harmonic series,
J. Algebra \textbf{194} (1997), 477--495.
\bibitem{IKZ06}
K. Ihara, M. Kaneko and D. Zagier, 
\emph{Derivation relation and double shuffle relations for multiple zeta values},
Compositio Math. \textbf{142} (2006), 307--338.
\bibitem{J21}{D. Jarossay}, 
\emph{Adjoint cyclotomic multiple zeta values and cyclotomic multiple harmonic values},
preprint, arXiv:1412.5099v5.
\bibitem{Kam16}{K. Kamano},
\emph{Finite Mordell-Tornheim multiple zeta values},
Funct.\ Approx.\ Comment.\ Math.\ \textbf{54}(2016), 65--72.
\bibitem{Kan19}
M.~Kaneko, 
\emph{An introduction to classical and finite multiple zeta values},
Publications math\'{e}matiques de {B}esan\c{c}on, no.\ 1 (2019), 103--129.
\bibitem{KZ21}
M. Kaneko, D. Zagier,
\emph{Finite multiple zeta values},
in preparation.
\bibitem{O17}
M. Ono,
\emph{Finite multiple zeta values associated with $2$-colored rooted trees},
J. Numb. Theory \textbf{181} (2017), 99--116.
\bibitem{OSY20}
M. Ono, S. Seki and S. Yamamoto,
\emph{Truncated $t$-adic symmetric multiple zeta values and double shuffle relations},
 Research in Number Theory \textbf{7}, 15 (2021). 
\bibitem{Y13}
S.~Yamamoto,
\emph{Explicit evaluation of certain sums of multiple zeta-star values}, Funct.\ Approx.\ Comment.\ Math.\ \textbf{49} (2) (2013), 283--289.         
         
         
        
         
\end{thebibliography}
\end{document}